% SIAM Article Template
\documentclass[onefignum,onetabnum]{siamart171218}

% Information that is shared between the article and the supplement
% (title and author information, macros, packages, etc.) goes into
% ex_shared.tex. If there is no supplement, this file can be included
% directly.

% SIAM Shared Information Template
% This is information that is shared between the main document and any
% supplement. If no supplement is required, then this information can
% be included directly in the main document.

% Packages and macros go here
\usepackage{lipsum}
\usepackage{amsfonts}
\usepackage{graphicx}
\usepackage{epstopdf}
\usepackage{algorithmic}
\ifpdf
  \DeclareGraphicsExtensions{.eps,.pdf,.png,.jpg}
\else
  \DeclareGraphicsExtensions{.eps}
\fi

% Add a serial/Oxford comma by default.

% Used for creating new theorem and remark environments
\newsiamremark{remark}{Remark}
\newsiamremark{hypothesis}{Hypothesis}
\crefname{hypothesis}{Hypothesis}{Hypotheses}
\newsiamthm{claim}{Claim}

% Sets running headers as well as PDF title and authors
\headers{An equation modeling a camphor motion}{J.~Fan, M.~ Nagayama, G.~Nakamura and M.~Uesaka}

% Title. If the supplement option is on, then "Supplementary Material"
% is automatically inserted before the title.
\title{Short-time approximate solutions of an equation modeling a camphor motion}

% Authors: full names plus addresses.
\author{
    Jishan Fan%
    \thanks{Department of Applied Mathematics, Nanjing Forestry University, Nanjing 210037, China (\email{fanjishan@njfu.edu.cn}).}
    \and Masaharu Nagayama%
    \thanks{Research Institute for Electronic Science, Hokkaido University, N12W7, Kita-Ward, Sapporo, Hokkaido, 060-0812, Japan.
  (\email{nagayama@es.hokudai.ac.jp}, \email{muesaka@es.hokudai.ac.jp}).}
    \and Gen Nakamura%
    \thanks{Department of Mathematics, Hokkaido University, N10W8, Kita-Ward, Sapporo, Hokkaido, 060-0810, Japan.
  (\email{gennakamura@gmail.com}).}
    \and Masaaki Uesaka\footnotemark[2]
}

\usepackage{amsopn}

%% Added on Overleaf: enabling xr
\makeatletter
\newcommand*{\addFileDependency}[1]{% argument=file name and extension
  \typeout{(#1)}% latexmk will find this if $recorder=0 (however, in that case, it will ignore #1 if it is a .aux or .pdf file etc and it exists! if it doesn't exist, it will appear in the list of dependents regardless)
  \@addtofilelist{#1}% if you want it to appear in \listfiles, not really necessary and latexmk doesn't use this
  \IfFileExists{#1}{}{\typeout{No file #1.}}% latexmk will find this message if #1 doesn't exist (yet)
}
\makeatother

\newcommand*{\myexternaldocument}[1]{%
    \externaldocument{#1}%
    \addFileDependency{#1.tex}%
    \addFileDependency{#1.aux}%
}
%%% END HELPER CODE
%%% Local Variables: 
%%% mode:latex
%%% TeX-master: "ex_article"
%%% End: 

% Optional PDF information
\ifpdf
\hypersetup{
  pdftitle={An equation modeling a camphor motion},
  pdfauthor={M.~Nagayama, G.~Nakamura and M.~Uesaka}
}
\fi

% The next statement enables references to information in the
% supplement. See the xr-hyperref package for details.

%% Use \myexternaldocument on Overleaf
\myexternaldocument{ex_supplement}

% FundRef data to be entered by SIAM
%<funding-group>
%<award-group>
%<funding-source>
%<named-content content-type="funder-name"> 
%</named-content> 
%<named-content content-type="funder-identifier"> 
%</named-content>
%</funding-source>
%<award-id> </award-id>
%</award-group>
%</funding-group>
\newcommand{\xc}{x_\mathrm{c}}
\newcommand{\xic}{\xi_\mathrm{c}}
\newcommand{\tilxic}{\widetilde{\xi}_\mathrm{c}}
\newcommand{\rc}{r_\mathrm{c}}

\newcommand{\bdry}[1]{\partial#1}
\newcommand{\id}{\,\mathrm{d}}
\newcommand{\sid}[1]{\,\mathrm{d}\sigma_{#1}}
\newcommand{\pdnormal}{\partial_\nu}
\newcommand{\RR}{\mathbb{R}}
\newcommand{\abs}[1]{\left|#1\right|}

\newcommand{\tilJ}{\widetilde{J}}
\newcommand{\tilU}{\widetilde{U}}
\newcommand{\tilu}{\widetilde{u}}
\newcommand{\tilxc}{\widetilde{x}_{\mathrm{c}}}
\newcommand{\yc}{y_{\mathrm{c}}}

\newcommand{\pd}[1]{\partial_{#1}}
\newcommand{\pmt}[1]{%
    \begin{pmatrix}
        #1
    \end{pmatrix}
}
\newcommand{\diffop}[3]{
\ifnum #1=1
    \frac{\mathrm{d}#2}{\mathrm{d}#3}%
\else%
    \frac{\mathrm{d}^{#2}#1}{\mathrm{d}#3^{#1}}%
\fi
}%

\newcommand{\calZ}{\mathcal{Z}}
\newcommand{\norm}[2]{\left\|#1\right\|_{#2}}
\newcommand{\ZMT}{\calZ_{M,T}}

\newcommand{\Utzero}{U^{t_0}}
\newcommand{\Vtzero}{V^{t_0}}
\newcommand{\omegacap}[1]{(#1+\omega)\cap \Omega}
\begin{document}

\maketitle

% REQUIRED
\begin{abstract}
As a profound example of spontaneous motion,
we analyze the motion of a camphor particle on a water surface.
The motion is modeled as an initial boundary value problem for a coupled nonlinear system of a diffusion equation and an ordinary differential equation in a two-dimensional domain.
Since it seems that the well-posedness of this initial boundary value problem is missing,
we provided its proof.
Then, by constructing an approximate solution to this initial boundary value problem,
we gave a mathematically rigorous interpretation of a camphor motion.
That is we showed that the motion of a camphor locally in time has a self-avoiding orbit.
We also gave the numerical performance of the approximate solution. 
\end{abstract}

% REQUIRED
\begin{keywords}
  self-propelled motion, reaction-diffusion system, short-time asymptotics
\end{keywords}

% REQUIRED
\begin{AMS}
  35B40, 35K51, 35K57
\end{AMS}
\section{Introduction}${}$\\

Spontaneous motions appear in several fields including biology, chemistry and nonlinear physics.
For example,
molecular motors in living organisms~\cite{Mallik2004},
bacteria swarming~\cite{Kaiser2007},
self-propelled motion of a catalystic nanoparticle~\cite{Paxton2005}, 
motion of a surfactant particle at water surface~\cite{SatoshiNakata2003,SatoshiNakata1998,SatoshiNakata1997},
droplet motion~\cite{Magome1996,Nagai2005,Sumino2005,Tersoff2009} are known as spontaneous motions.

Among these, a simple but profound example of self-propelled motion is that of camphor particles on water surface.
The study of this motion was
originated by the report~\cite{Rayleigh1889} by Rayleigh in 1889.
Since then, many theoretical and experimental studies have revealed the mechanism of the spontaneous motion of camphor particles. 
According to these studies (\cite{Hayashima2001,Nagayama2004a,SatoshiNakata1997}, for example), the mechanism of this motion is explained as follows:
When a camphor particle is set afloat on the water surface, 
the camphor dissolves in the water.
The camphor acts as surfactant and decreases the surface tension of the water
and this produces the spatial difference of the surface tension.
This difference of the surface tension is the driving force of the self-propelled motion.

The mathematical modeling and its analysis of camphor particle motions have been widely studied.
The widely-used model is a coupled system of a reaction-diffusion equation describing the camphor concentration on the water surface and an ordinary differential equation describing the motion of the camphor,
which is similar to the one given below as an initial boundary value problem \eqref{eq:diffusion}--\eqref{eq:initial}.
In one dimensional setting like the camphor motion in the thin water channel,
the detail mathematical studies and comparison with experiments have been developed, for examples, the oscillatory and unidirectional motion of one camphor disk on annular channel (\cite{Hayashima2001,Nagayama2004a}),
the synchronized motion of two camphor boats (\cite{Kohira2001})
and the motion of many camphor disks or boats like a traffic jam (\cite{Suematsu2010b}) have been reproduced numerically.
The bifurcation phenomena of the motion of two camphor disks have been analyzed in~\cite{Nishi2015}.
Moreover, in~\cite{Ei2014,Ei2015,Ikeda}, the rigorous analysis have been performed for the collective motion of the camphor boats by using center manifold theory and the detail motion of camphor boats is revealed.

A two dimensional model for a self-propelled motion by a surface tension appears in~\cite{Kitahata2005,Mikhailov:2006:CS:1201067}.
In the viewpoint of mathematical analysis, it was shown in~\cite{Mimura2009} that the radially symmetric solution giving a standing camphor particle is unstable under a small perturbation of the camphor when the camphor is circle-shaped.
The rotating motion of two camphor particles with fixed distance and center of mass was analyzed in~\cite{Koyano2017}.
Theoretical studies have been developed also for the non-symmetric camphor particles.
For example, by both mathematical analysis and experiments, it is revealed that an elliptic camphor particle is easier to move along short axis (\cite{Iida2014}) and that two elliptic particles interact so as to be parallel with long axes (\cite{Ei2018}).

In this article, based on~\cite{Mimura2009}, we consider the following mathematical model of the motion of a single circle-shaped camphor particle.
To begin with, 
let $\Omega$ be a domain in $\RR^2$ with $C^3$ boundary
$\partial\Omega$ and $\omega :=B_{\rc}(0) := \left\{ x \in \RR^2 \,;\, \abs{x} < \rc \right\}$ an open disk in $\RR^2$, which represents the shape of the camphor particle.
Assume that $x_0\in\Omega$ and the camphor is initially given as $x_0+\omega:=\{x_0+x\,;\, x\in\omega\}\subset\overline\Omega$.
Further we assume the camphor shape does not change during its motion. 
Then, the model is given as the initial boundary value problem:
\begin{align}
	& \pd{t}u = D \triangle u - \alpha u + f_{\rc}(x-\xc)\big|_{Q_T} & & \mbox{in}\ Q_T := (0,T) \times \Omega, \label{eq:diffusion} \\
    & \pdnormal u = 0 & & \mbox{on}\ (0,T) \times \bdry{\Omega}, \label{eq:Neumann}   \\
    & u(0,\cdot) = u_0 & & \mbox{in}\ \Omega,
    \label{eq:u-initial}   \\
	& \mu \diffop{1}{}{t}\xc = \int_{\partial\big((\xc+\omega)\cap\Omega\big)} \gamma(u(\cdot,y))\nu_y \sid{y} & & \mbox{in}\ (0,T), \label{eq:ODE} \\
	& \xc(0) = x_0. \label{eq:initial}
\end{align}
 Here ``$\big|_{Q_T}$'' denotes the restriction to $Q_T$,
 $\partial_\nu$ is the outer normal derivative at $\partial\Omega$,
 $\nu_y$ is the outer unit normal of the boundary $\partial\big((\xc+\omega)\cap\Omega\big)$ of $(\xc+\omega)\cap\Omega$
 and $\mathrm{d}\sigma_y$ is the line element.
 Note that since $(\xc+\omega) \cap \Omega$ has a Lipschitz boundary,
 $\nu_y$ is defined almost everywhere.
This initial boundary value problem is a coupled nonlinear system of a parabolic equation~\eqref{eq:diffusion} and an ordinary differential equation~\eqref{eq:ODE}.

The physical meaning of the above model and the assumptions which we are going to put to the model are as follows.
Equation~\eqref{eq:diffusion} describes the diffusion, the sublimation and supply from the camphor particle on the water surface.
Here the function $u$ represents the surface concentration of camphor.
The positive constants $D$ and $\alpha$ are the diffusion and sublimation coefficients, respectively.
Also, the function $f_{\rc}$ represents the camphor source from the particle
and it is usually given in the form of a characteristic function: 
    \begin{equation}\label{f_rc}
    f_{\rc} = F_0\mathbf{1}_\omega
\end{equation}
with the characteristic function $\mathbf{1}_\omega$ of $\omega$,
where the constant $F_0 > 0$ denotes the supply rate of camphor.
The viscosity coefficient $\mu$ is a positive constant and the surface tension $\gamma \in C^2([0,\infty))$ is positive and monotone decreasing.
We further assume for $\gamma$ that
\begin{equation}\label{beta}
\beta_j := \sup_{s \in \RR} \abs{\gamma^{(j)}(s)}<\infty,\ j=0,1,2.
\end{equation}
Further, as for the data $u_0$ and $x_0$ we assume that
\begin{equation}
\begin{array}{ll}
u_0\in W^{1,\infty}(\Omega),\ u_0\ge 0\ 
\text{in}\ \overline\Omega\\
\text{with the compatibility condition}\ \partial_\nu u=0\ \text{on}\ \partial\Omega
\end{array}
\end{equation}
and
\begin{equation}
x_0\in\overline\Omega,
\end{equation}
respectively. Here $W^{m,\infty}(\Omega)$ denotes the $L^\infty$-Sobolev space of order $m\in \mathbb{Z}_+:=\mathbb{N}\cup\{0\}$ in $\Omega$.

Our model is just a small modification of the original model appeared in~\cite{Mimura2009}.
More precisely, we modified the camphor source $f_{\rc}(x-\xc)$ to $f_{\rc}(x-\xc)\big|_{Q_T}$ and the domain of the boundary integral in~\eqref{eq:ODE} from $\partial(\xc+\omega)$ in the original model to $\partial\big((\xc+\omega) \cap \Omega)\big)$.
This modification is due to make the equation~\eqref{eq:ODE} mathematically meaningful
when the camphor particle $\xc+\omega$ touches the boundary $\partial \Omega$ of the domain
and even move more to $\partial\Omega$.
As a consequence if $(\xc+\omega)\cap\Omega=\emptyset$,
then $\xc$ stops moving.
This means that for small enough $\omega$, $\xc$ can represent the camphor particle $\xc+\omega$
and it will be trapped at the boundary $\partial\Omega$.
So far there is not any good model which can give the behavior of camphor particle movement at the boundary. 
It should be remarked here that in our model,
the particle is considered to be very light so that the inertial force can be neglected,
and hence the above first order equation~\eqref{eq:ODE} is chosen to describe the motion of a single camphor.

%The physically meaningful situation is that the camphor lies inside $\Omega$ and in this situation, the right hand side of~\eqref{eq:ODE} becomes the boundary integral on $\partial\omega$.

We further remark that this model is a special case of a more comprehensive model in~\cite{Ei2018,Iida2014,Nakata2018}
which additionally has a moment equation of the camphor particle coming from the lack of radial symmetry.
Nevertheless, this model is still useful
because it can be used to interpret some experimental observations of a (non-symmetric) camphor particle.

In spite of the importance of the initial boundary value problem \eqref{eq:diffusion}--\eqref{eq:initial} for analyzing the self-propelled motion,
it seems that there is no result on its well-posedness as long as the authors know.
Our first result in this paper is the following well-posedness for this initial boundary value problem.
Here and henceforth, we assume without losing generality that $x_0=0$, because we can easily modify the result by translation when $x_0 \neq 0$.

\begin{theorem}\label{thm:well-posedness}
    Let $T > 0$ and $M > 0$.
    Then there exists $r^\ast = r^\ast(M,T) > 0$ such that for any $0<\rc < r^\ast$, 
    the initial boundary value problem \eqref{eq:diffusion}--\eqref{eq:initial} has a unique solution $(u,\xc)$ with the regularity:
    \begin{equation}\label{regularity}
        u \in C^0([0,T];W^{1,\infty}(\Omega))\cap C^1((0,T];L^p(\Omega))\ \mbox{\rm for any $p\ge 2$ and}\  
        \xc \in C^1([0,T]); 
    \end{equation}
    the non-negativity:
    \begin{equation}\label{non-negativity}
    u(t,x)\ge 0,\,(t,x)\in [0,T]\times\overline\Omega
    \end{equation}
    and the estimate:
    \begin{equation}\label{bound}
    \norm{{\xc}}{1/2} \le M.
    \end{equation}
    Here  $\norm{\cdot}{1/2}$ is the norm of H\"older space $C^{1/2}([0,T])$ with H\"older exponent $1/2$ (see \eqref{Holder norm}).
    \end{theorem}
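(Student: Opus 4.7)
The plan is to prove well-posedness by a Banach fixed-point argument on the complete metric space $\ZMT$ of admissible paths $\xc\in C^{1/2}([0,T];\RR^2)$ with $\xc(0)=0$ and $\norm{\xc}{1/2}\le M$, equipped with the induced metric. To each $\xc\in\ZMT$ I would associate the unique solution $u$ of the linear parabolic problem \eqref{eq:diffusion}--\eqref{eq:u-initial} with the bounded non-negative source $f_{\rc}(\cdot-\xc(t))\big|_{Q_T}$, obtained via the analytic Neumann heat semigroup on $\Omega$. The Hölder continuity in $t$ of $f_{\rc}(\cdot-\xc(t))$ as an $L^p$-valued map (the $L^p$-norm of its increments is controlled by the measure of the symmetric difference of two translates of $\omega$) combined with standard $L^p$ parabolic regularity and the embedding $W^{2,p}\hookrightarrow W^{1,\infty}$ for large $p$ yields the regularity in \eqref{regularity}; the parabolic maximum principle yields \eqref{non-negativity} because $u_0,f_{\rc}\ge0$. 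Defining
\[
\Phi(\xc)(t) := \frac{1}{\mu}\int_0^t \int_{\partial((\xc(s)+\omega)\cap\Omega)} \gamma(u(s,y))\,\nu_y\sid{y}\id s,
\]
the fixed points of $\Phi$ are in bijection with solutions of the full coupled system.

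\textbf{Self-map property.} Since $\abs{\gamma}\le\beta_0$ and the Lipschitz perimeter of $(\xc(s)+\omega)\cap\Omega$ is bounded by $\abs{\partial\omega}=2\pi\rc$, the $s$-integrand above is pointwise bounded by $2\pi\beta_0\rc/\mu$. Therefore $\Phi(\xc)$ is Lipschitz in $t$ with constant $\le 2\pi\beta_0\rc/\mu$ and vanishes at $t=0$, so $\norm{\Phi(\xc)}{1/2}\le C(T)\beta_0\rc/\mu$. Choosing $r^\ast$ small enough that this upper bound is $\le M$ yields $\Phi(\ZMT)\subset\ZMT$.

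\textbf{Contraction step.} For $\xc^{(i)}\in\ZMT$ with associated concentrations $u^{(i)}$, the difference $w:=u^{(1)}-u^{(2)}$ solves a linear heat equation whose source $f_{\rc}(\cdot-\xc^{(1)}(t))-f_{\rc}(\cdot-\xc^{(2)}(t))$ is supported on the symmetric difference of two $\rc$-disks with centers separated by $\abs{\xc^{(1)}(t)-\xc^{(2)}(t)}$, and hence has $L^1$-mass of order $F_0\rc\norm{\xc^{(1)}-\xc^{(2)}}{C^0}$. Parabolic regularity then gives $\norm{w}{C^0([0,T];W^{1,\infty}(\Omega))}\le K_1(\rc)\norm{\xc^{(1)}-\xc^{(2)}}{C^0}$ with $K_1(\rc)\to 0$ as $\rc\to 0$. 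I would then split the difference of the two boundary integrals in \eqref{eq:ODE} into (i) the change of the integrand $\gamma(u)\nu$ on a common piece of boundary, controlled by $\beta_1\norm{w}{\infty}\cdot 2\pi\rc$ together with Lipschitz estimates for $u^{(i)}$ from \eqref{regularity}, and (ii) the change of the domain of integration itself, which I would handle by parameterising $\partial\omega$ and comparing the arcs whose translates fall inside $\Omega$, using the Hausdorff bound $\abs{\xc^{(1)}(t)-\xc^{(2)}(t)}$ together with the $C^3$-regularity of $\partial\Omega$. Both contributions carry an overall factor of $\rc$, so the resulting Lipschitz constant $K_2(\rc,T)$ of $\Phi$ tends to $0$ as $\rc\to 0$; shrinking $r^\ast$ further makes $K_2<1/2$, and Banach's fixed point theorem then delivers a unique $\xc\in\ZMT$ with $\xc=\Phi(\xc)$. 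The $C^1$-regularity asserted in \eqref{regularity} is recovered a posteriori from \eqref{eq:ODE} and the $t$-continuity of $u$, while \eqref{bound} and \eqref{non-negativity} are direct by construction.

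\textbf{Main obstacle.} The hardest point is precisely the geometric dependence of $\partial((\xc+\omega)\cap\Omega)$ on $\xc$. When the camphor touches $\partial\Omega$, the set $(\xc+\omega)\cap\Omega$ may change topology and its boundary loses smoothness, so that estimating the difference of boundary integrals in (ii) above is delicate. Smallness of $\rc$ is what tames this: it keeps the total perimeter involved of order $\rc$ and compensates the $\rc^{-1}$-type factor appearing when comparing arcs of $\partial\omega$ clipped by $\partial\Omega$, so that the full map $\Phi$ becomes contractive uniformly in $t\in[0,T]$ as soon as $\rc$ is small enough.
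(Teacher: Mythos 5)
Your setup (the space $\ZMT$, the map $\Phi$, the self-map estimate, the overall contraction strategy) matches the paper's, but there is a genuine gap at precisely the spot you yourself flag as ``the hardest point,'' namely the term measuring the change of the boundary integral under a change of domain. You propose to estimate
\[
\int_{\partial((\xc^{(1)}(t)+\omega)\cap\Omega)}\gamma(u^{(2)})\,\nu_y\,\mathrm d\sigma_y
-\int_{\partial((\xc^{(2)}(t)+\omega)\cap\Omega)}\gamma(u^{(2)})\,\nu_y\,\mathrm d\sigma_y
\]
by ``parameterising $\partial\omega$ and comparing the arcs whose translates fall inside $\Omega$,'' but this boundary comparison is \emph{not} Lipschitz in the center: if $z_1+\omega$ is tangent to $\partial\Omega$ from inside and $z_2=z_1+\varepsilon\nu$ with $\varepsilon\downarrow 0$, the arc of $\partial(z_2+\omega)$ lying in $\Omega$ has length $2\pi\rc-O(\sqrt{\rc\,\varepsilon})$, so a direct arc-by-arc comparison only yields a square-root modulus of continuity and the ``overall factor of $\rc$'' you invoke does not appear. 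As stated, your contraction constant $K_2(\rc,T)$ is asserted, not established, and the argument would fail near tangential contact.

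The paper sidesteps the boundary geometry entirely by first applying the divergence theorem on each Lipschitz set, $\int_{\partial A}\gamma(u)\,\nu_y\,\mathrm d\sigma_y=\int_A\gamma'(u)\nabla u\,\mathrm dy$, so that the troublesome term becomes a volume integral $\int_\Omega(\mathbf 1_{(\xc^{(1)}+\omega)\cap\Omega}-\mathbf 1_{(\xc^{(2)}+\omega)\cap\Omega})\gamma'(u^{(2)})\nabla u^{(2)}\,\mathrm dy$; the $L^1$-mass of the difference of indicators is controlled by the area of the symmetric difference of two $\rc$-disks, which is genuinely Lipschitz with bound $4\rc\,|\xc^{(1)}(t)-\xc^{(2)}(t)|$ (the paper's Lemma~\ref{lemma:diff_char}). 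That is the missing idea. You also use the same indicator-difference mechanism elsewhere (to bound $\|w\|$), so you already have the tool in hand; you just need to apply the divergence theorem before comparing, rather than comparing clipped arcs directly. Separately, ``the parabolic maximum principle... direct by construction'' for \eqref{non-negativity} is too fast: with a merely bounded source and $u\in C^1((0,T];L^p)$ the solution is not classical, and the paper has to mollify in $x$, shift to $u_0+\eta>0$, apply the classical maximum principle in the interior, and pass to the limit $\eta,\varepsilon\to 0$. That is a real (if more routine) step you should not skip.
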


\begin{remark}
${}$
\begin{itemize}
    \item \cref{thm:well-posedness} states that for any time interval, 
    the solution of the initial boundary value problem system~\eqref{eq:diffusion}--\eqref{eq:initial} exists uniquely
    provided that the radius $\rc$ of the camphor particle is sufficiently small.
    We obtain this kind of unique solvability for arbitrarily fixed time $T>0$
    as long as the radius $\rc$ of the camphor particle is sufficiently small.
    The reason for this can be explained as follows:
    It follows immediately  from~\eqref{eq:ODE} that the a priori estimate
    \begin{equation}\label{speed}
        \norm{\diffop{1}{\xc}{t}}{C^0([0,T])} \le \frac{2\pi\rc\beta_0}{\mu}
    \end{equation}
    holds for any $T > 0$.
    This inequality means that the camphor particle moves more slowly as $\rc$ becomes smaller.
    Hence, even for small $M>0$ or large $T>0$, by choosing sufficiently small $\rc$, the camphor particle stays moving so that estimate \eqref{bound} holds.
    We also note that this type of well-posedness may fail when the inertial force term is contained in \eqref{eq:ODE} with assumption that the mass of the camphor particle depends on $\rc$.
    \item In~\cref{thm:well-posedness}, we took sufficiently small $\rc>0$ for given $T > 0$.
    Since \eqref{speed} tells us that the roles of making $\rc$ small and $T$ small are the same, we can also have the well-posedness for sufficiently small time $T > 0$ for arbitrarily fixed $\rc > 0$.
\end{itemize}
\end{remark}

\medskip
Main interest of the model~\eqref{eq:diffusion}--\eqref{eq:initial} is the orbit of $\xc(t)$,
which represents the motion of the camphor particle.
We investigate the detailed motion by constructing a short-time approximate solution of \eqref{eq:diffusion}--\eqref{eq:initial} with small $\rc$.
In order to achieve this, we follow the method in~\cite{Constantinescu2010,Nakagawa2014}.
This method is summarized as follows:
\begin{enumerate}
    \item Introduce new spatial and time variables with small parameter $\rc > 0$;
    \item Set an ansatz that the asymptotic solution of the scaled system is written as a formal power series in $\rc$ and derive a linear equation for the each term of the series.
\end{enumerate}
This method was the key to construct the fundamental solution of linear parabolic equation (\cite{Cheng2011a,Constantinescu2010})
and was utilized to obtain the closed-form asymptotics of the Black--Scholes equation (\cite{Cheng2011}).
This method was also applied to analyze the behaviour of a hot spot of a solution of a reaction-diffusion system which models the iron ore sintering process (\cite{Nakagawa2014}).

\medskip
Applying this method to the initial boundary value problem \eqref{eq:diffusion}--\eqref{eq:initial},
we find a {\it short time approximate solution} and its error estimate,
which is an another result of this paper.

\begin{theorem}\label{thm:main}
    Let $t_0\in (0,T)$ and a pair $(u,\xc)$ be the solution to the initial boundary value problem \eqref{eq:diffusion}--\eqref{eq:initial}.
    Assume that $\overline{\xc(t_0)+\omega}\subset\Omega$ and \begin{equation}
        \gamma(w)=\gamma_0-\gamma_1w+O(|w|^2),\,\,|w|\ll1
    \end{equation}
    with positive constants $\gamma_0,\,\gamma_1$. Define the function $\tilu(x,t)$ by
    \begin{equation}\label{eq:tilu}
    \begin{split}
        \tilu(t,x) &:= \rc^2u^{t_0}(x) + \rc^2(t - t_0)A_1 u^{t_0}(x) + \rc^2(t - t_0)^2 A_2 u^{t_0}(x) \\
        &\phantom{=} + \int_{t_0}^{t} \left[ f_{\rc}\left(x - \xc(t')\right) + (t - t')A_1f_{\rc}\left(x- \xc(t')\right) \right] \id t' \\
        &\phantom{=} + u_0(x) - \alpha u_0(x)(t - t_0) + \frac{1}{2} \alpha^2 u_0(x) (t - t_0)^2,\,\,t\sim t_0,\,\,x\in \xc(t_0)+\omega
    \end{split}
    \end{equation}
    with $u^{t_0}(x):= u(t_0,x)$.
    Further let $\tilde{x}_c(t)$ be the solution to following nonlinear integro-differential equation:
    \begin{equation}\label{eq:tilxc}
    \begin{split}
        \mu \diffop{1}{\tilxc}{t}(t) &= %
        - \gamma_1 \rc \left( v^{t_0}(\tilxc(t)) + (t - t_0)A_1v^{t_0}(\tilxc(t)) + (t - t_0)^2 A_2 v^{t_0}(\tilxc(t))\right) \\
        &\phantom{=}%
        - \gamma_1 \rc \left( \int_{t_0}^{t} \left[ \varphi\left(\tilxc(t) - \tilxc(t')\right) + (t - t')A_1\varphi\left(\tilxc(t) - \tilxc(t')\right) \right] \id t' \right),
    \end{split}
    \end{equation}
    for $t\sim t_0$ and $x\in \xc(t_0)+\omega$ with the initial condition $\tilxc(t_0) = \xc(t_0)$,
    where
    \begin{equation}
    \begin{split}
        v^{t_0}(x) &:= \int_{\bdry{B_{\rc}(0)}} u^{t_0}(x + y) \nu_y \id\sigma_y, \\
        \varphi(x) &:= \int_{\bdry{B_{\rc}(0)}} f_{\rc} \left(x + y\right)\nu_y \id\sigma_y, \\
        A_1 &:= D\triangle_x - \alpha, \\
        A_2 &:= \frac{1}{2}D^2\triangle_x^2 - \alpha \triangle_x.
    \end{split}
    \end{equation}
    Then for arbitrarily fixed $B > 0$,
    there exists $\delta > 0$ such that
    \begin{equation}
        \begin{array}{ll}
            \norm{u-\tilu}{C^0([t_0,t_0+\delta];W^{1,\infty}(\Omega))}\le B\rc^2, \\
            \norm{\xc - \tilxc}{C^0([t_0,t_0+\delta])} \le B\rc.
        \end{array}
    \end{equation}
    We call $(\tilde{u},\tilde{x}_c)$ the short time approximate solution of the initial boundary value problem \eqref{eq:diffusion}--\eqref{eq:initial}.
\end{theorem}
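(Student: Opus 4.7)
The pair $(\tilu,\tilxc)$ is a truncated formal Taylor expansion of the exact solution $(u,\xc)$ around $t=t_0$, with errors organised in powers of the small parameter $\rc$ and of $\delta:=t-t_0$. The strategy is therefore the standard asymptotic one: substitute $(\tilu,\tilxc)$ into \eqref{eq:diffusion} and \eqref{eq:ODE}, verify that the residuals are of higher order in $\rc$ and $\delta$, and close the estimate via short-time parabolic regularity together with a Gronwall argument on the ODE satisfied by $\xc-\tilxc$.

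First, I will compute the parabolic residual
\[
R_u(t,x) := (\pd{t}-D\triangle+\alpha)\tilu(t,x)-f_{\rc}(x-\xc(t)).
\]
Direct differentiation of \eqref{eq:tilu} shows that the Leibniz derivative of the source integral returns $f_{\rc}(x-\xc(t))$ exactly plus an $A_1$-weighted remainder, while the $\rc^2$-weighted polynomial in $(t-t_0)$ and the $u_0$-polynomial are designed to mimic the formal semigroup expansions $e^{(t-t_0)A_1}u^{t_0}$ and $e^{-\alpha(t-t_0)}u_0$ respectively. What remains is a truncation error of order $(t-t_0)^2$ involving $A_1^2 u^{t_0}$, $A_1^2 f_{\rc}$ and $D\triangle u_0$, bounded in $L^p(\Omega)$ uniformly in $t$ using the regularity $u\in C^0([0,T];W^{1,\infty}(\Omega))$ provided by \cref{thm:well-posedness}. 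For the ODE residual $R_x$, the assumption $\overline{\xc(t_0)+\omega}\subset\Omega$ together with the a priori speed bound \eqref{speed} guarantees $\overline{\tilxc(t)+\omega}\subset\Omega$ for small $\delta$, so the integration in \eqref{eq:ODE} is over the full circle $\partial B_{\rc}(\tilxc(t))$, on which the constant $\gamma_0$ contributes zero by $\int_{\partial B_\rc}\nu_y\sid{y}=0$. The Taylor expansion $\gamma(w)=\gamma_0-\gamma_1 w+O(w^2)$ linearises the boundary integrand, and substituting the expansion \eqref{eq:tilu} at points $y\in\partial B_{\rc}(\tilxc(t))$ reproduces the right-hand side of \eqref{eq:tilxc} up to a residual of size $O(\rc(t-t_0)^2+\rc^5)$, the $\rc^5$ coming from the quadratic remainder of $\gamma$ and $u=O(\rc^2)$.

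Set $w:=u-\tilu$ and $z:=\xc-\tilxc$. A crucial feature of the construction is that the source integral in \eqref{eq:tilu} is written with the true $\xc$ (not $\tilxc$), so $w$ satisfies the clean forced linear heat--Neumann problem $(\pd{t}-D\triangle+\alpha)w=-R_u$, $\pdnormal w=0$ on $\partial\Omega$, with no residue from the position difference. Short-time parabolic $L^p$ estimates combined with Sobolev embedding give $\|w\|_{C^0([t_0,t_0+\delta];W^{1,\infty})}=O(\delta^{\theta})\|R_u\|_{L^\infty_t L^p_x}$ for some $\theta>0$, hence of order $\rc^2\delta^{1+\theta}$. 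Meanwhile $z$ satisfies an ODE whose right-hand side is the difference of boundary integrals of the same form; it is Lipschitz in $z$ with constant $O(\rc\|\nabla u\|_{L^\infty})$ (obtained by translating $\int_{\partial B_{\rc}(\cdot)}\gamma(u(y))\nu_y\sid{y}$ and differentiating inside) and Lipschitz in $w$ with constant $O(\rc)$, so a Gronwall estimate on $[t_0,t_0+\delta]$ yields $|z(t)|=O(\rc\delta^{1+\theta'})$. For arbitrarily prescribed $B>0$, both error bounds are then achieved by shrinking $\delta$. The main technical obstacle is the discontinuity of $f_{\rc}=F_0\mathbf{1}_\omega$, which makes $\|f_{\rc}(\cdot-\xc)-f_{\rc}(\cdot-\tilxc)\|_{L^\infty}$ of order $1$ even for $|z|\ll 1$; this is sidestepped by the construction, which builds $\tilu$ using $\xc$ rather than $\tilxc$, so that the discontinuity enters only through finite-$p$ norms of $R_u$, never through an $L^\infty$ estimate.
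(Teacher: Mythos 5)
Your proposal takes a genuinely different route from the paper's proof. You work in the original $(t,x)$ variables, compute the parabolic and ODE residuals of $(\tilu,\tilxc)$, and exploit the fact that the source integral in \eqref{eq:tilu} is taken at the exact $\xc(t')$: this decouples the PDE for $w = u - \tilu$ from the position error $z = \xc - \tilxc$, so $w$ is handled by a single linear forced parabolic estimate and $z$ then closes via Gronwall. The paper instead passes to the microscopic variables $(\xi,\tau) = ((x-x_0)/\rc,\,(t-t_0)/\rc^2)$, writes a \emph{coupled} system of integral equations for $R := \rc^{-2}(U-\tilU)$ and $\zeta := \rc^{-2}(\xic - \tilxic)$, and proves convergence of a Picard sequence $\{(R_n,\zeta_n)\}$ via iterated bounds of the form $|\zeta_{n+1}-\zeta_n| \lesssim (\tau-t_0)^n/n!\,|\zeta_1-\zeta_0|$. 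Notably, the $R$-equation the paper actually solves contains the forcing $\rc^{-2}\bigl[F(\cdot-\xic) - F(\cdot-\tilxic)\bigr]$, which would vanish identically under your reading of \eqref{eq:tilu}; so the paper's fixed-point scheme is effectively set up for the variant of $\tilu$ sourced at $\tilxc$, whereas you take the written statement at face value. Your decoupling observation is simpler and consistent with the theorem as stated, but it is not the argument the paper runs.

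There is, however, a step in your outline that fails as written. You claim that the discontinuity of $f_{\rc} = F_0\mathbf{1}_\omega$ is "sidestepped by the construction" and "enters only through finite-$p$ norms of $R_u$." But \eqref{eq:tilu} itself contains $A_1 f_{\rc} = (D\triangle - \alpha)f_{\rc}$, and your residual $R_u$ contains $A_1^2 f_{\rc}$ and $D^2\triangle^2 f_{\rc}$ from the $(t-t_0)^2$ truncation. For a characteristic function, $\triangle f_{\rc}$ is a distribution carried by $\partial\omega$ and lies in no $L^p(\Omega)$, so $R_u$ is not a function and the quantity $\norm{R_u}{L^\infty_t L^p_x}$ that your parabolic estimate requires is undefined; the same obstruction ruins the Sobolev embedding $W^{2,p}\hookrightarrow W^{1,\infty}$ you invoke. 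The paper shares this difficulty — its Picard estimates invoke $\norm{F}{W^{1,1}(\Omega)}$, which is also infinite for $\mathbf{1}_{B_1(0)}$ — and implicitly resolves it by replacing $\mathbf{1}_\omega$ with the $C^4$-mollified source used in the numerical section. Your outline is viable only under that same smoothness hypothesis on $f_{\rc}$; it needs to be stated explicitly, and the claim that the construction alone handles the discontinuity should be dropped.
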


\begin{remark}
\cref{thm:main} tells us that if the time interval $[t_0,t_0+\delta]$ is sufficiently short and $\rc$ is sufficiently small,
the solution $(u,\xc)$ of the initial boundary value problem \eqref{eq:diffusion}--\eqref{eq:initial} is approximated by $(\tilu,\tilxc)$ defined by~\eqref{eq:tilu} and~\eqref{eq:tilxc}.
\end{remark}

\medskip
In order to see the physical meaning of \eqref{eq:tilxc},
we decompose its right hand side into the sum of the following three parts $\mathcal{F}_j\ (j=1,2,3)$:
\begin{align*}
    \mathcal{F}_1 &:= - \gamma_1 \rc \left[ v^{t_0}(\tilxc(t)) + (t - t_0)A_1v^{t_0}(\tilxc(t)) + (t - t_0)^2 A_2 v^{t_0}(\tilxc(t))\right], \\
    \mathcal{F}_2 &:= - \gamma_1 \rc \int_{t_0}^{t} \varphi\left(\tilxc(t) - \tilxc(t')\right)  \id t' , \\
    \mathcal{F}_3 &:= - \gamma_1 \rc \int_{t_0}^{t}  (t - t') A_1\varphi\left(\tilxc(t) - \tilxc(t')\right)\id t'.
\end{align*}
The term $\mathcal{F}_1$ represents the force exerted by the profile $u^{t_0}$ of the camphor concentration at time $t=t_0$. 
In order to explain the second term $\mathcal{F}_2$, we consider the direction of the vector $\varphi$.
By the radial symmetry of $f_{\rc}$, we can assume that $x$ is of form $\pmt{x_1 \\ 0}$.
By the change of variable $y = \rc \pmt{\cos \theta \\ \sin \theta}$, we obtain that
\begin{equation}
    \varphi(x) = \int_0^{2\pi} f_{\rc}(x_1 + \rc \cos \theta, \rc \sin \theta) \pmt{\cos \theta \\ \sin \theta} \id \theta.
\end{equation}
Since $f_{\rc} = F_0\mathbf{1}_\omega$ is radially symmetric, the second component of $\varphi(x)$ vanishes.
The first component is given by
\begin{equation}
    \int_0^{2\pi} f_{\rc}(x_1 + \rc \cos \theta, \rc \sin \theta)\id \theta = \int_{\theta_0}^{2\pi-\theta_0} \cos \theta \id \theta = - 2\sin \theta_0,
\end{equation}
where $0 < \theta_0 < \pi$ satisfies $\cos \theta_0 = \dfrac{x_1}{2\rc}$,
which comes from the condition that $\abs{\pmt{x_1 + \rc \cos \theta \\ \rc \sin \theta}} \le \rc$.
Hence the first component of the vector $\varphi(x)$ is non-positive and this implies that $\varphi(x)$ and $x$ are in opposite directions if $\varphi(x)$ itself does not vanish.
Then the vector $-\varphi(\tilxc(t) - \tilxc(t'))$ directs from $\tilxc(t')$ to $\tilxc(t)$.
Therefore we regard $\mathcal{F}_2$ as ``the repulsive force'' acting at $\tilxc(t)$ from the past orbit of the camphor particle itself.

Another integral term $\mathcal{F}_3$ is also related to the path of the camphor particle but the direction of the force is not definite.
If $t-t'$ is small, however, we expect that the contribution of $\mathcal{F}_3$ is much smaller than that of $\mathcal{F}_2$.
Thus we see that the surface tension around the path decreases and that the particle moves as if it avoided the previous path itself.
We would like to emphasize that this description of camphor motion in short time was derived rigorously by asymptotic analysis.
We will also check the performance of our approximate solution by numerical simulations.

\medskip
The rest of this paper is organized as follows.
In \cref{sec:well-posedness}, we prove \cref{thm:well-posedness}.
In \cref{sec:asymptotics}, we give an exposition of the derivation of the short time approximate solution $(\tilu,\tilxc)$.
In \cref{sec:error},
we complete the proof of \cref{thm:main} by showing the error estimate for the short time approximate solution.
Finally in \cref{sec:numerical},
we show the numerical results to show the error of the approximate solution.

\medskip
\section{Proof of \cref{thm:well-posedness}}\label{sec:well-posedness}${}$
\par
In this section, we prove \cref{thm:well-posedness}. The proof will be given as follows.
We first prove an existence of solution $(u(t,x), \xc(t))$ to the initial boundary value problem \eqref{eq:diffusion}--\eqref{eq:initial}.
Next we show its uniqueness.
Finally we will show the non-negativity of $u(t,x)$. 

Let start to prove an existence of solution $(u(t,x),\xc(t))$.
For fixed $M>0, T>0$, define
\[
\ZMT := \left\{z\in C^{1/2}([0,T])\,;\, \norm{z(\cdot)-x_0}{1/2} \le M \right\},
\]
where $C^{1/2}([0,T])$ is the set of H\"older continuous function of order $1/2$ defined on $[0,T]$ equipped with norm 
\begin{equation}\label{Holder norm}\norm{ z}{1/2}=\norm{z}{C^0([0,T])}+
\sup_{0\le t_1<t_2\le T}\frac{\abs{z(t_2)-z(t_1)}}{(t_2-t_1)^{1/2}}.
\end{equation}

By taking $\xc(t)=z(t)\in \ZMT$,
consider the initial boundary value problem \eqref{eq:diffusion}--\eqref{eq:u-initial} for $u=u(t,x;z)$,
and define the mapping $\Phi$ on $\ZMT$ by
\begin{equation}\label{eq:dfn_of_Phi}
    \Phi(z)(t) := x_0 + \frac{1}{\mu} \int_0^t \left( \int_{\bdry{((z(\tau)+\omega)\cap\Omega)}} \gamma(u(\tau, y;z))\nu_y \sid{y} \right) \id \tau
\end{equation}
for $z \in \ZMT$.
We will prove that $\Phi$ is a contraction mapping on
$\ZMT$ with respect to the $C^{1/2}([0,T])$ topology 
so that its fixed point satisfies \eqref{eq:diffusion}--\eqref{eq:initial}.
More precisely we will show the following three assertions (1)--(3).
\begin{enumerate}
    \renewcommand{\labelenumi}{(\arabic{enumi})}
    \item $\Phi$ maps $\ZMT$ into $\ZMT$.
    \item $\Phi$ is a contraction in $\ZMT$, that is, there exists a positive constant $c < 1$ such that
    \begin{displaymath}
        \norm{\Phi (z_1) - \Phi (z_2)}{1/2}\le c\norm{ z_1-z_2}{1/2}.
    \end{displaymath}
    for any $z_1,z_2 \in \ZMT$.
    We note that
    this also guarantees the uniqueness of solution to
    the initial value problem \eqref{eq:u-initial}--\eqref{eq:initial} for $z=\xc$.
    \item The unique fixed point of $\Phi$ in $\ZMT$ and satisfies the original system~\eqref{eq:diffusion}--\eqref{eq:initial}.
\end{enumerate}

\medskip
To show these assertions, we first prepare two preliminary facts.
The first one is about the following well known result on the unique solvability of the initial boundary value problem \eqref{eq:diffusion}--\eqref{eq:u-initial} with a general source term.

\begin{lemma}{(\cite{Ito1992})}\label{lemma:FS}
    Let $2\le p\le\infty$, $0<\lambda<1$, $g \in C^\lambda([0,T];L^p(\Omega))$ and $u_0\in W^{1,\infty}(\Omega)$,
    where  $C^\lambda([0,T];L^p(\Omega))$ denotes the set of $L^p(\Omega)$ valued H\"older continuous functions on $[0,T]$ with H\"older exponent $\lambda$.
    Assume that $u_0$ satisfies the compatibility condition $\partial_\nu u_0=0$ at $\partial\Omega$.
    Consider the following initial boundary value problem: 
    \[
        \begin{aligned}\label{solution u(g)}
	        & \pd{t}u - D \triangle u + \alpha u=g(t,x) & & \mbox{in}\ Q_T := (0,T) \times \Omega, \\
            & \pdnormal u = 0  & & \mbox{on}\ (0,T) \times \bdry{\Omega},  \\
            & u(0,\cdot) = u_0 & & \mbox{in}\ \Omega.
        \end{aligned}
    \]
    Then there exists a unique solution $u\in C^0([0,T]; W^{1,p}(\Omega))\cap C^1((0,T]; L^p(\Omega))$ of this initial boundary value problem and $u$ has the following representation:
    \begin{equation}\label{representation}
        u(t,x) =\int_\Omega \Gamma(t,x;0,y)u_0(y)\,\id y+\int_0^t \int_{\Omega} \Gamma(t,x;s,y) g(s,y) \id y \id s,
    \end{equation}
   where $\Gamma(t,x;s,y)$ is the Green function of the operator $\partial_t-D\Delta+\alpha$ in $Q_T$ with Neumann boundary condition at $(0,T)\times\partial\Omega$ and singularity at $(s,y)\in (0,T)\times\Omega$.
   More precisely $\Gamma(t,x;s,y)$ is a distribution defined in $\big((0,T)\times\Omega\big)\times\big((0,T)\times\Omega\big)$ such that it is $C^\infty$ in $(t,s)$ and $C^2$ in $(x,y)$ except $(t,x)=(s,y)$.
   Further, $\Gamma(s,x;s,y)=\delta(x-y)$, $\Gamma(t,x;s,y)=0$ for $t<s$ and it satisfies the following estimates:
    \begin{align}
        \abs{\Gamma(t,x;s,y)} &\le c_0 (t-s)^{-1} \exp\left(-c_2 \frac{\abs{x-y}^2}{t-s}\right), \label{eq:Gamma-0} \\
        \abs{\nabla_x \Gamma(t,x;s,y)} &\le c_1 (t-s)^{-3/2} \exp\left(-c_2 \frac{\abs{x-y}^2}{t-s}\right), \label{eq:Gamma-1}
    \end{align}
for $x,y \in \Omega$ and $t > s$,
where $c_0,c_1,c_2 > 0$ do not depend on $s$, $t$, $x$ and $y$.
\end{lemma}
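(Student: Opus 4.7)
\emph{Proof plan.} The plan is to invoke classical linear parabolic theory for the operator $L := \partial_t - D\Delta + \alpha$ on $Q_T$ with homogeneous Neumann boundary condition on $(0,T)\times\partial\Omega$. Since the statement is quoted from~\cite{Ito1992}, the task reduces to sketching two ingredients: existence of the Neumann Green function $\Gamma$ with the stated Gaussian-type bounds, and verification that the Duhamel formula \eqref{representation} yields a solution with the claimed regularity.

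For the Green function I would use Levi's parametrix method. Take as parametrix the whole-space fundamental solution
\[
    \Gamma_0(t,x;s,y) = \frac{1}{4\pi D (t-s)} \exp\!\left(-\alpha(t-s) - \frac{|x-y|^2}{4D(t-s)}\right),
\]
and look for $\Gamma = \Gamma_0 + H$, where $H$ is written as a single-layer-type potential on $(s,t)\times\partial\Omega$ whose density $\phi$ is recovered from the Volterra integral equation enforcing $\partial_\nu \Gamma = 0$ at $\partial\Omega$. The $C^3$ regularity of $\partial\Omega$ ensures that this Volterra kernel is weakly singular of the right order, so Picard iteration converges and produces $\phi$. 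The Gaussian estimates \eqref{eq:Gamma-0}--\eqref{eq:Gamma-1} then follow from Aronson-type pointwise bounds on the Neumann heat kernel of a smooth bounded domain, obtainable either directly from the parametrix expansion or from Davies-type heat-kernel upper bounds.

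Once $\Gamma$ is available, I would define $u$ by \eqref{representation} and verify the regularity termwise. The initial-datum term $\int_\Omega \Gamma(t,x;0,y) u_0(y)\,dy$ lies in $C^0([0,T];W^{1,p}(\Omega))$ thanks to $u_0 \in W^{1,\infty}(\Omega)$, the compatibility condition $\partial_\nu u_0 = 0$, and the gradient bound \eqref{eq:Gamma-1}. The Duhamel term inherits the same spatial regularity from \eqref{eq:Gamma-1}, and its $C^1((0,T];L^p(\Omega))$ smoothness in time uses the H\"older hypothesis $g \in C^\lambda([0,T];L^p(\Omega))$: splitting $g(s,\cdot) = g(t,\cdot) + [g(s,\cdot) - g(t,\cdot)]$ inside the time integral lets one combine a semigroup identity for the first piece with a bound of size $|t-s|^\lambda$ times a Gaussian for the second, yielding differentiability in $t$ with values in $L^p(\Omega)$. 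Uniqueness is standard: the difference of two solutions satisfies the homogeneous problem, and testing with this difference, integrating by parts (the boundary term vanishes by the Neumann condition), and applying Gronwall force it to vanish.

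The main obstacle is establishing the uniform Gaussian bounds \eqref{eq:Gamma-0}--\eqref{eq:Gamma-1} up to the boundary: the correction $H$ is the nontrivial piece, and handling it cleanly near $\partial\Omega$ requires the $C^3$ regularity (for local flattening and for controlling the Volterra iteration uniformly in the boundary parametrization). Since the entire result is quoted from~\cite{Ito1992}, in the actual write-up I would cite these ingredients rather than reproduce them in full.
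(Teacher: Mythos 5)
Your proposal takes essentially the same route as the paper: the paper itself does not prove the lemma but quotes it from \cite{Ito1992}, and the accompanying remark merely notes that the Green function comes from the fundamental solution (using time-translation invariance of the coefficients), that the $C^0([0,T];W^{1,p})\cap C^1((0,T];L^p)$ regularity of the Duhamel representation is a standard verification, and that uniqueness follows from the $L^2$ theory in \cite{Wloka1987}. Your parametrix/single-layer sketch of the Neumann Green function with Gaussian bounds, the H\"older-splitting device for differentiability in time, and the energy/Gronwall uniqueness argument are all correct and faithfully expand what the paper defers to the references.
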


\begin{remark}
 $\Gamma_0(t,x,y;s):=\Gamma(t,x;s,y)\big|_{t\ge s}$ is the fundamental solution of the Cauchy problem with Neumann boundary condition giving initial condition at $t=s$.
 Since the coefficients of our equation and the boundary operator of our boundary condition do not depend on time, $\Gamma_0(t,x,y;s)=\Gamma_0(t-s,x;0)$.
 The existence of the fundamental solution $\Gamma_0(t,x,y;s)$ and its properties can be seen in~\cite{Ito1992}.
 In terms of the fundamental solution,
 the Green function $\Gamma(t,x;s,y)$ is given as 
\[
\Gamma(t,x;s,y)=
\left\{
\begin{array}{ll}\Gamma_0(t,x,y;s)\,\,&\text{for}\,\,t\ge s,\\
0\,\,&\text{for}\,\,t<s.
\end{array}
\right.
\]
Having this in mind, it is quite standard to show that $u$ given by \eqref{representation} belongs to $C^0([0,T];W^{1,p}(\Omega))\cap C^1((0,T];L^p(\Omega))$
and is a solution to the initial boundary value problem.
As for the uniqueness,
we can just put it to the uniqueness of the $L^2$ theory (see~\cite{Wloka1987}).
\end{remark}

\medskip
The second preliminary fact is about the $L^p$-estimate for the difference of the characteristic functions.
\begin{lemma}\label{lemma:diff_char}
    Let $x_1,x_2 \in \RR^2$.
    Then for any $p\ge1$ we have
    \begin{equation}\label{ineq for char func}
        \left(\int_\Omega 
        \abs{\mathbf{1}_{\omegacap{x_1}} - \mathbf{1}_{\omegacap{x_2}}}^p \id y\right)^{1/p} \le \left(4 \rc \abs{x_1-x_2}\right)^{1/p}.
    \end{equation}
\end{lemma}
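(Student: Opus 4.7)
The plan is to reduce the $L^p$ estimate to a measure-theoretic bound on the symmetric difference of two disks. The starting observation is that for any measurable sets $A,B\subset\RR^2$, the pointwise difference $\mathbf{1}_A-\mathbf{1}_B$ takes values only in $\{-1,0,1\}$, so $|\mathbf{1}_A-\mathbf{1}_B|^p=|\mathbf{1}_A-\mathbf{1}_B|=\mathbf{1}_{A\triangle B}$ for every $p\ge 1$. Applying this with $A_i=(x_i+\omega)\cap\Omega$ yields
\begin{equation*}
\int_\Omega\abs{\mathbf{1}_{A_1}-\mathbf{1}_{A_2}}^p\id y=\abs{A_1\triangle A_2}.
\end{equation*}
Then I would observe that $A_1\triangle A_2\subset(x_1+\omega)\triangle(x_2+\omega)$: if $y\in A_1\setminus A_2$ then $y\in\Omega$ and $y\in x_1+\omega$ but $y\notin x_2+\omega$ (since $y\in\Omega$), and similarly for $A_2\setminus A_1$. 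So the problem reduces to bounding the measure of the symmetric difference of two disks of radius $\rc$ whose centers are distance $d:=\abs{x_1-x_2}$ apart.

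The next step is a direct geometric calculation using Fubini. Choose coordinates so that $x_1=0$ and $x_2=(d,0)$. For each transverse coordinate $y_2\in[-\rc,\rc]$, writing $h=h(y_2):=\sqrt{\rc^2-y_2^2}$, the horizontal cross-sections of the two disks are the intervals $[-h,h]$ and $[d-h,d+h]$. A case analysis (overlapping when $d\le 2h$, disjoint when $d>2h$) shows that the one-dimensional measure of the symmetric difference of these two intervals equals $\min(2d,4h)\le 2d$. Integrating in $y_2$,
\begin{equation*}
\abs{(x_1+\omega)\triangle(x_2+\omega)}=\int_{-\rc}^{\rc}\min\bigl(2d,4h(y_2)\bigr)\id y_2\le 4\rc d.
\end{equation*}

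Combining the two steps gives $\int_\Omega|\mathbf{1}_{A_1}-\mathbf{1}_{A_2}|^p\id y\le 4\rc\abs{x_1-x_2}$, and taking the $p$-th root produces the claimed estimate \eqref{ineq for char func}. No step is really an obstacle: the only nontrivial part is the cross-section bound, and its virtue is that the naive upper bound $\min(2d,4h)\le 2d$ (rather than the sharper but $y_2$-dependent one) is already enough to yield the clean constant $4\rc$ on the right-hand side; attempting to use $4h$ would give a factor $\pi\rc^2$ which is worse when $d<2\rc$ and irrelevant to the desired linear-in-$d$ bound. I would therefore simply present the reduction to the symmetric difference, the containment $A_1\triangle A_2\subset(x_1+\omega)\triangle(x_2+\omega)$, and the one-line Fubini computation.
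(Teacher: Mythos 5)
Your proof is correct. The first two steps — reducing to $p=1$ because the integrand takes values only in $\{0,1\}$, and then passing to the symmetric difference of the unclipped disks via the inclusion $((x_1+\omega)\cap\Omega)\triangle((x_2+\omega)\cap\Omega)\subset(x_1+\omega)\triangle(x_2+\omega)$ — are exactly the same as in the paper. Where you diverge is in the geometry: the paper computes the area of the lens-shaped symmetric difference \emph{exactly}, arriving (with $\rho=\ell/(2\rc)$) at $4\rc^2 f(\rho)$ where $f(\rho)=\tfrac{\pi}{2}-\arccos\rho+\rho\sqrt{1-\rho^2}$, and then bounds $f(\rho)\le 2\rho$ by computing $f'(\rho)=2\sqrt{1-\rho^2}\le 2$. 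You instead slice by Fubini, note that each one-dimensional cross-section of the symmetric difference has length $\min(2d,4h(y_2))\le 2d$, and integrate over $y_2\in[-\rc,\rc]$ to get $4\rc d$ directly. Your route avoids the closed-form circular-segment area and the calculus check, at the cost of discarding the sharper $\min$ bound; since only the linear-in-$d$ estimate is ever used downstream, nothing is lost, and the slicing argument is arguably the more elementary of the two.
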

\begin{proof}
    Since the integrand of the left hand side of \eqref{ineq for char func} does not depend on $p\ge1$,
    it is enough to show \eqref{ineq for char func} for $p=1$.
    Let us denote $\ell := \abs{x_1 - x_2}$.
    The difference $\abs{\mathbf{1}_{\omegacap{x_1}} - \mathbf{1}_{\omegacap{x_2}}}$ is equal to the characteristic function of $((x_1+\omega) \ominus (x_2+\omega)) \cap \Omega$,
    where $A \ominus B := (B \setminus A) \cup (A \setminus B)$ is a symmetric difference of two sets $A,B$.
    Then we have
    \[
        \begin{aligned}
            &\int_\Omega 
        \abs{\mathbf{1}_{\omegacap{x_1}} - \mathbf{1}_{\omegacap{x_2}}} \id y \\
        & \le \int_\Omega 
        \mathbf{1}_{(x_1+\omega) \ominus (x_2 + \omega)}\id y \\
        & \le \begin{cases}
            4 \rc^2 \left(\dfrac{\pi}{2} - \arccos \dfrac{\ell}{2\rc} + \dfrac{\ell}{2\rc} \sqrt{1-\left(\dfrac{\ell}{2\rc}\right)^2}\right) & \text{\rm if}\,\,\ell < 2\rc \\
            2 \pi \rc^2 & \text{\rm if}\,\,\ell \ge 2\rc
        \end{cases}
        \end{aligned}
    \]
    by simple geometrical calculation.
    Since the function $f(\rho) = \dfrac{\pi}{2} - \arccos \rho + \rho \sqrt{1 - \rho^2}$ has the derivative $f'(\rho) = 2\sqrt{1-\rho^2}$,
    we have $f(\rho) \le \min \left\{2 \rho, \dfrac{\pi}{2} \right\}$ for any $0 \le \rho < 1$, which completes the proof.
\end{proof}

\medskip
The following lemma guarantees that the solution $u=u(t,x;z)$ of \eqref{solution u(g)} discussed in \cref{lemma:FS} with $g(t,x)=f_{\rc}(x-z(t))\big|_{Q_T}$ is available. Hence the map $\Phi(z)$ can be defined.

\begin{lemma} For $z\in \ZMT$, $g(t,x)=f_{\rc}(x-z(t))\big|_{Q_T}\in C^\lambda([0,T]; L^p(\Omega))$ with $\lambda=1/(2p)$.
\end{lemma}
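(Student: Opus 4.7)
My plan is to directly verify the two defining conditions of membership in $C^\lambda([0,T];L^p(\Omega))$: uniform boundedness of $g(t,\cdot)$ in $L^p(\Omega)$, and the Hölder continuity estimate in $t$ with exponent $\lambda=1/(2p)$. The key observation is that because $f_{\rc}=F_0\mathbf{1}_\omega$, we have
\[
g(t,x)=F_0\,\mathbf{1}_{(z(t)+\omega)\cap\Omega}(x),
\]
so all norms of $g(t,\cdot)-g(s,\cdot)$ reduce to integrals of differences of characteristic functions of translates of $\omega$ intersected with $\Omega$. This brings \cref{lemma:diff_char} directly into play.

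For the boundedness, I would simply note $\|g(t,\cdot)\|_{L^p(\Omega)}\le F_0\,|\omega|^{1/p}=F_0(\pi\rc^2)^{1/p}$, which is uniform in $t\in[0,T]$. For the Hölder-in-$t$ estimate, I take $0\le t_1<t_2\le T$ and compute
\[
\|g(t_1,\cdot)-g(t_2,\cdot)\|_{L^p(\Omega)}
=F_0\left(\int_\Omega\bigl|\mathbf{1}_{(z(t_1)+\omega)\cap\Omega}-\mathbf{1}_{(z(t_2)+\omega)\cap\Omega}\bigr|^p\id y\right)^{1/p}.
\]
Applying \cref{lemma:diff_char} with $x_1=z(t_1)$ and $x_2=z(t_2)$, the right-hand side is bounded by
\[
F_0\bigl(4\rc\,|z(t_1)-z(t_2)|\bigr)^{1/p}.
\]
Because $z\in\ZMT$, by the very definition of the $C^{1/2}$-seminorm we have $|z(t_1)-z(t_2)|\le M\,|t_1-t_2|^{1/2}$, so
\[
\|g(t_1,\cdot)-g(t_2,\cdot)\|_{L^p(\Omega)}\le F_0(4\rc M)^{1/p}\,|t_1-t_2|^{1/(2p)}.
\]

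This is exactly the desired Hölder estimate with exponent $\lambda=1/(2p)$, and the Hölder seminorm is controlled by $F_0(4\rc M)^{1/p}$. Combined with the uniform $L^p$ bound, this yields $g\in C^\lambda([0,T];L^p(\Omega))$. There is no real obstacle here; the whole argument is a one-line application of the previously established \cref{lemma:diff_char} together with the H\"older regularity built into the definition of $\ZMT$. The only point worth flagging is the loss of Hölder exponent caused by taking $p$-th roots, which is why $\lambda=1/(2p)$ rather than $1/2$; this is intrinsic to measuring jumps of characteristic functions in $L^p$.
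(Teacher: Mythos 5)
Your argument is correct and is essentially the paper's own proof: both reduce the Hölder estimate in $t$ to the characteristic-function comparison of \cref{lemma:diff_char} and then invoke the $C^{1/2}$ bound built into the definition of $\ZMT$. If anything your write-up is slightly cleaner, since you bound $\norm{g(t_1,\cdot)-g(t_2,\cdot)}{L^p(\Omega)}$ directly, whereas the paper's displayed inequality starts from the difference of norms $\abs{\norm{g(t_1,\cdot)}{L^p}-\norm{g(t_2,\cdot)}{L^p}}$, which is not quite the quantity needed for $L^p$-valued Hölder continuity (though the middle and right-hand sides of the paper's chain do give the intended estimate).
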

\begin{proof}
    Let $t_1, t_2\in [0,T]$. Then we only need to observe the following.
    By Minkowski's inequality and \cref{lemma:diff_char}, we have
    \begin{equation}
    \begin{aligned}
        &\abs{\norm{ g(t_1,\cdot)}{L^p(\Omega)}-\norm{ g(t_2,\cdot)}{L^p(\Omega)}} \\
        &\le F_0\left(\int_\Omega| \mathbf{1}_\omega(x-z(t_1))-\mathbf{1}_\omega(x-z(t_2))|^p\id x\right)^{1/p} \\
        &\le F_0(4r_{\rc}\norm{ z}{1/2})^{1/p}\,\abs{t_1-t_2}^{1/(2p)},
    \end{aligned}
\end{equation}
which completes the proof.
\end{proof}

\medskip
Now we are ready to show the first assertion.
\begin{lemma}
    There exists $r^\ast = r^\ast(M,T) > 0$ such that
    if $\rc \le r^\ast$, 
    the mapping $\Phi$ defined by~\eqref{eq:dfn_of_Phi} maps $\ZMT$ into itself.
\end{lemma}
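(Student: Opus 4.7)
The plan is to estimate $\norm{\Phi(z)-x_0}{1/2}$ directly and to choose $r^\ast$ small enough to ensure that it is bounded by $M$. First I would observe that the integrand $\gamma(u(\tau,y;z))\nu_y$ is pointwise bounded in modulus by $\beta_0$, thanks to assumption \eqref{beta} and the fact that $\nu_y$ is a unit vector. Consequently the whole argument reduces to controlling the length of the integration curve $\bdry{\big((z(\tau)+\omega)\cap\Omega\big)}$ uniformly in $\tau\in[0,T]$ and $z\in\ZMT$, which is where the smallness of $\rc$ must be exploited.

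The second step, which I expect to be the main obstacle, is the geometric bound
\[
\int_{\bdry{\big((z(\tau)+\omega)\cap\Omega\big)}}\sid{y}\le L\rc
\]
for some constant $L=L(\Omega)$ independent of $z$ and $\tau$. This boundary consists of the arc of $\bdry{(z(\tau)+\omega)}$ that lies in $\overline\Omega$, whose length is at most $2\pi\rc$, together with the arc $\bdry\Omega\cap\overline{z(\tau)+\omega}$ which appears when the camphor particle approaches $\bdry\Omega$. Using the $C^3$-regularity and compactness of $\bdry\Omega$, its curvature is uniformly bounded by some $\kappa_{\max}$; choosing $\rc<\kappa_{\max}^{-1}$ guarantees that $\bdry\Omega\cap B_{\rc}(z(\tau))$ is either empty or a single $C^3$-arc expressible as a graph over a tangent segment of length at most $2\rc$, whose arc length is then bounded by some $C_\Omega\rc$ depending only on $\Omega$. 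Setting $L:=2\pi+C_\Omega$ completes this step.

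Combining the pointwise bound on the integrand with the length estimate yields the Lipschitz inequality
\[
\abs{\Phi(z)(t_2)-\Phi(z)(t_1)}\le \frac{L\rc\beta_0}{\mu}\,\abs{t_2-t_1},\qquad 0\le t_1<t_2\le T,
\]
which immediately implies, via $\abs{t_2-t_1}\le T^{1/2}\abs{t_2-t_1}^{1/2}$ in the H\"older seminorm,
\[
\norm{\Phi(z)-x_0}{1/2}\le \frac{L\rc\beta_0}{\mu}\bigl(T+T^{1/2}\bigr).
\]
I would then set
\[
r^\ast:=\frac{\mu M}{L\beta_0\bigl(T+T^{1/2}\bigr)},
\]
so that every $\rc\le r^\ast$ forces $\Phi(z)\in\ZMT$. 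Measurability (indeed, continuity) in $\tau$ of the inner integral, needed to make sense of $\Phi(z)(t)$ itself, is a routine consequence of the continuous dependence of $z(\tau)$, of $u(\tau,\cdot;z)$ from Lemma~\ref{lemma:FS}, and of the curve $\bdry{\big((z(\tau)+\omega)\cap\Omega\big)}$ on $\tau$; it is worth a brief verification but does not obstruct the main estimate.
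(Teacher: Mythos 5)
Your proof is correct and follows the same strategy as the paper's: bound the integrand pointwise by $\beta_0$, bound the arc length of $\partial\bigl((z(\tau)+\omega)\cap\Omega\bigr)$ by $O(\rc)$, deduce a Lipschitz-in-$t$ estimate, convert it to a $1/2$-H\"older estimate via $\abs{t_2-t_1}\le T^{1/2}\abs{t_2-t_1}^{1/2}$, and choose $r^\ast$ accordingly. You are in fact more careful than the paper at the arc-length step: the paper simply writes the bound $2\pi\rc$, which is sharp when $\Omega$ is convex but in general ignores the contribution from $\partial\Omega\cap\overline{z(\tau)+\omega}$ that you correctly isolate and estimate by $C_\Omega\rc$; the one small refinement to your argument is that ensuring $\partial\Omega\cap B_{\rc}(z(\tau))$ is a single arc is governed by the reach of $\partial\Omega$ rather than the curvature bound $\kappa_{\max}^{-1}$ alone (a dumbbell domain has small curvature but a narrow neck), which is a minor technicality that does not change the conclusion.
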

\begin{proof}
Since $\gamma$ is bounded, we can easily have
    \begin{displaymath}
        \norm{\Phi(z)-x_0}{C^0([0,T])} \le \frac{2\pi \beta_0 {\rc}}{\mu} T.
    \end{displaymath}
For $0\le t_1<t_2\le T$, denote the difference operator $\delta_{t_2,t_1}$ of
$\Phi(z)$ by
\begin{equation}\label{difference}
\delta_{t_2,t_1}\Phi(z)=\Phi(z)(t_2)-\Phi(z)(t_1).
\end{equation}
Then we have
\begin{equation*}
\begin{aligned}
\abs{\delta_{t_2,t_1}\Phi(z)} &= \abs{\mu^{-1}\int_{t_1}^{t_2}\left(\int_{\partial((z(\tau)+\omega)\cap\Omega)}\gamma(u(\tau,y;z))\nu_y\id \sigma_y\right)\id\tau} \\
&\le \frac{2\pi{\rc}\beta_0}{\mu}\abs{t_1-t_2} \\
&\le \frac{2\pi{\rc}\beta_0}{\mu}\sqrt{2T}\abs{t_1-t_2}^{1/2}.
\end{aligned}
\end{equation*}
Hence we have obtained
\begin{equation}\label{estimate of Phi(z)}
\norm{\Phi(z)-x_0}{C^1([0,T])}\le \frac{2\pi\beta_0{\rc}}{\mu}\left(T+\sqrt{2T}\right).
\end{equation}
Then we can complete the proof by just taking $\rc>0$ to satisfy
\begin{equation}
    \rc\le \frac{\mu M}{2\pi\beta_0}\left(T+\sqrt{2T}\right)^{-1}.
    \end{equation}
\end{proof}

\medskip
\begin{remark}
Note that we obtained the estimate~\eqref{estimate of Phi(z)} which does not depend on $u=u(t,x;z)$ thanks to the boundedness of $\gamma$.
\end{remark}

\medskip
Now we prove the contraction property of $\Phi$.
\begin{lemma}\label{lemma:contraction}
       There exist $r^\ast = r^\ast(M,T) > 0$ and $0 < c < 1$ such that
        \begin{displaymath}
        \norm{\Phi (z_1) - \Phi (z_2)}{1/2} \le c\norm{z_1-z_2}{1/2},\,\,z_1,z_2 \in \ZMT
    \end{displaymath}
    for any $0<\rc \le r^\ast$.
\end{lemma}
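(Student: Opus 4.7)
The plan is to reduce the $C^{1/2}$-contraction to a pointwise-in-$\tau$ estimate of
\[
G(z,\tau) := \int_{\partial\big((z(\tau)+\omega)\cap\Omega\big)} \gamma(u(\tau,\cdot\,;z))\nu_y\sid y,
\]
since $\Phi(z_1)(t)-\Phi(z_2)(t)=\mu^{-1}\int_0^t[G(z_1,\tau)-G(z_2,\tau)]\id\tau$. Repeating the computation that yielded \eqref{estimate of Phi(z)} gives
\[
\norm{\Phi(z_1)-\Phi(z_2)}{1/2} \le \frac{T+\sqrt{T}}{\mu}\sup_{\tau\in[0,T]}\abs{G(z_1,\tau)-G(z_2,\tau)},
\]
so it suffices to prove $\sup_\tau\abs{G(z_1,\tau)-G(z_2,\tau)}\le C(M,T)\rc\norm{z_1-z_2}{1/2}$; shrinking $r^\ast$ below $\mu/\bigl(2C(T+\sqrt T)\bigr)$ then produces a contraction constant strictly less than one.

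Letting $A_i(\tau):=(z_i(\tau)+\omega)\cap\Omega$ and $u_i:=u(\cdot,\cdot;z_i)$, I apply the divergence theorem to rewrite $G(z,\tau)=\int_{A(\tau)}\nabla\gamma(u)\id y$ and decompose
\[
G(z_1,\tau)-G(z_2,\tau) = \int_{A_1}\nabla\gamma(u_1)\id y - \int_{A_2}\nabla\gamma(u_1)\id y + \int_{A_2}\nabla[\gamma(u_1)-\gamma(u_2)]\id y =: I+II.
\]
The term $I$ contributes only on $A_1\triangle A_2$, whose measure is bounded by $4\rc\abs{z_1(\tau)-z_2(\tau)}$ via \cref{lemma:diff_char} with $p=1$; combined with a uniform bound $\norm{\nabla u(\cdot,\cdot;z)}{L^\infty(Q_T)}\le K(M,T)$ (obtained from~\eqref{representation} and \cref{lemma:FS} with $p$ large, together with the 2D Sobolev embedding $W^{1,p}\hookrightarrow L^\infty$), this yields $\abs{I}\le 4\beta_1 K\rc\norm{z_1-z_2}{C^0([0,T])}$. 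Reverting $II$ to boundary form by divergence, $II=\int_{\partial A_2}(\gamma(u_1)-\gamma(u_2))\nu_y\sid y$, and using $\abs{\partial A_2}=O(\rc)$ gives
\[
\abs{II}\le C\rc\beta_1\,\norm{u_1-u_2}{L^\infty(\Omega)}.
\]

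It therefore remains to produce the \emph{Lipschitz} estimate $\norm{u_1-u_2}{L^\infty(\Omega)}\le C\sqrt{T}\,\norm{z_1-z_2}{C^0([0,T])}$. Using the representation~\eqref{representation},
\[
(u_1-u_2)(t,x)=F_0\int_0^t\bigl[H(z_1(s),s)-H(z_2(s),s)\bigr]\id s,\qquad H(z,s):=\int_{(z+\omega)\cap\Omega}\Gamma(t,x;s,y)\id y,
\]
I would show $H(\cdot,s)$ is Lipschitz by differentiating the indicator in $z$ and transferring the derivative to $\Gamma$ by integration by parts; the estimate~\eqref{eq:Gamma-1} yields $\norm{\nabla_y\Gamma(t,x;s,\cdot)}{L^1(\Omega)}\le C(t-s)^{-1/2}$, which is integrable in $s$, and the fundamental theorem of calculus in $z$ delivers $\abs{H(z_1(s),s)-H(z_2(s),s)}\le C(t-s)^{-1/2}\abs{z_1(s)-z_2(s)}$, whence the Lipschitz bound after integrating in $s$. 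The \textbf{main obstacle} is exactly this step: merely inserting $\norm{f_{\rc}(\cdot-z_1)-f_{\rc}(\cdot-z_2)}{L^p}$ from \cref{lemma:diff_char} into $L^p$-heat-kernel bounds produces only $\norm{u_1-u_2}{L^\infty}\lesssim\norm{z_1-z_2}{C^0}^{1/p}$ with $1/p<1$, and such a strictly sub-Lipschitz exponent cannot be absorbed into a Lipschitz bound on $\ZMT$ as $\norm{z_1-z_2}{1/2}\to 0$; the integration-by-parts argument recovers the full exponent by shifting the singular $y$-derivative from the rough indicator $\mathbf{1}_\omega$ onto the smooth kernel $\Gamma$. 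Some care is also needed for the boundary contribution on $\partial\Omega\cap(z+\omega)$ when the camphor touches $\partial\Omega$, which is handled using the Neumann condition $\pdnormal\Gamma=0$ at $\partial\Omega$. Combining all estimates gives $\sup_\tau\abs{G(z_1,\tau)-G(z_2,\tau)}\le C'\rc\norm{z_1-z_2}{1/2}$ and hence contraction in $\ZMT$ after choosing $r^\ast(M,T)$ sufficiently small.
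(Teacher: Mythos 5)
Your proposal reaches the same conclusion but by a genuinely different route, and it contains one definite error in the step you flag as the ``main obstacle.''

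\emph{Comparison of routes.} The paper splits $G(z_1,\tau)-G(z_2,\tau)$, after passing to the volume integral, into three pieces: $I_1=\int_{A_1}[\gamma'(u_1)-\gamma'(u_2)]\nabla u_1$, $I_2=\int_{A_1}\gamma'(u_2)[\nabla u_1-\nabla u_2]$, and $I_3$ for the domain difference. The crucial structural point is that the paper never needs an $L^\infty$-Lipschitz bound on $u_1-u_2$: in each piece the $z$-sensitive factor ($u_1-u_2$, $\nabla u_1-\nabla u_2$, or the indicator difference) is estimated in $L^1$, where \cref{lemma:diff_char} and the kernel bounds \eqref{eq:Gamma-0}--\eqref{eq:Gamma-1} yield a true Lipschitz modulus in $\norm{z_1-z_2}{C^0}$, while the remaining factor is bounded in $L^\infty$. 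You instead collapse to a two-term decomposition and revert $II$ to boundary form, which forces you to prove $\norm{u_1-u_2}{L^\infty(\Omega)}\lesssim\norm{z_1-z_2}{C^0}$. You correctly diagnose that plugging \cref{lemma:diff_char} with $p>1$ into heat-kernel bounds only gives the sub-Lipschitz exponent $1/p$, which cannot close a contraction, and your remedy---shifting the $y$-derivative from the rough indicator onto $\Gamma$ by integration by parts---does recover the full exponent. Both strategies work, but the paper's $L^1/L^\infty$ pairing avoids the integration by parts and the accompanying boundary issue entirely, which is why it is the more economical choice.

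\emph{The error.} Your handling of the boundary contribution is wrong as stated. Integrating by parts in $y$ produces the surface term
\[
-\int_{\partial\Omega\cap(z+\omega)}\Gamma(t,x;s,y)\,\nu_y\sid y,
\]
which involves $\Gamma$ itself, not $\partial_\nu\Gamma$; the Neumann condition makes $\partial_\nu\Gamma=0$ on $\partial\Omega$ but does not make $\Gamma$ vanish there, so it does not kill this term. What actually renders it harmless is that $\partial\Omega\cap(z+\omega)$ is a one-dimensional curve: integrating the Gaussian bound \eqref{eq:Gamma-0} over a smooth curve gives $\int_{\partial\Omega\cap(z+\omega)}\abs{\Gamma(t,x;s,y)}\sid y\le C(t-s)^{-1/2}$, which matches the decay of the $\nabla_y\Gamma$ term and is integrable in $s$, so the Lipschitz estimate $\norm{u_1-u_2}{L^\infty}\le C\sqrt{T}\norm{z_1-z_2}{C^0}$ still holds. (One smaller point: \eqref{eq:Gamma-1} bounds $\nabla_x\Gamma$, not $\nabla_y\Gamma$; you need the symmetry $\Gamma(t,x;s,y)=\Gamma(t,y;s,x)$, which holds here because the operator with Neumann data is self-adjoint, to transfer the estimate.) With these two repairs your argument closes.
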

\begin{proof}
    For simplicity, put $u_j:= u(\cdot,\cdot;z_j)$ for $j=1,2$.
    Also, general positive constants in the forthcoming estimates which do not depend on $z_1, z_2\in\ZMT$ will be denoted by $C$.
    We first estimate
    the $C^0([0,T])$-norm of $\Phi(z_1)-\Phi(z_2)$.
    Observe that
    \begin{align*}
            &\Phi(z_1)(t) - \Phi(z_2)(t) \\%
            &=\frac{1}{\mu} \int_0^t \left( \int_{\bdry{(\omegacap{z_1(\tau)})}} \gamma(u_1(\tau, y)) \nu_y \sid{y} 
            - \int_{\bdry{(\omegacap{z_2(\tau)})}} \gamma(u_2(\tau, y)) \nu_y \sid{y} \right) \id \tau \\
            &= \frac{1}{\mu} \int_0^t \int_{\bdry{(\omegacap{z_1(\tau)})}} \left[ \gamma(u_1(\tau, y)) - \gamma(u_2(\tau, y)) \right]\nu_y \sid{y} \id \tau \\
            &\phantom{=} + \frac{1}{\mu} \int_0^t \left( \int_{\bdry{(\omegacap{z_1(\tau)})}} \gamma(u_2(\tau, y)) \nu_y \sid{y} 
            - \int_{\bdry{(\omegacap{z_2(\tau)}})} \gamma(u_2(\tau, y)) \nu_y \sid{y} \right) \id \tau \\
            &= \frac{1}{\mu} \int_0^t \int_{\omegacap{z_1(\tau)}} \left[ \gamma'(u_1(\tau, y)) \nabla u_1(\tau, y)- \gamma'(u_2(\tau, y)) \nabla u_2(\tau, y) \right] \id y \id \tau \\
            &\phantom{=} + \frac{1}{\mu} \int_0^t \left( \int_{\bdry{\omegacap{z_1(\tau)}}} \gamma(u_2(\tau, y)) \nu_y \sid{y} 
            - \int_{\bdry{\omegacap{z_2(\tau)}}} \gamma(u_2(\tau, y)) \nu_y \sid{y} \right) \id \tau.
        \end{align*}
    Let us divide $\Phi(z_1)(t) - \Phi(z_2)(t)$ into three parts:
    \begin{align*}
            &\Phi(z_1)(t) - \Phi(z_2)(t) = I_1 + I_2 + I_3, \\
            I_1 &:= \frac{1}{\mu} \int_0^t \int_{\omegacap{z_1(\tau)}} \left[\gamma'(u_1(\tau, y)) - \gamma'(u_2(\tau, y))\right]\nabla u_1(\tau,y) \id y \id \tau, \\
            I_2 &:= \frac{1}{\mu} \int_0^t \int_{\omegacap{z_1(\tau)}} \gamma'(u_2(\tau,y)) \left[\nabla u_1(\tau,y) - \nabla u_2(\tau,y) \right] \id y \id \tau, \\
            I_3 &:= \frac{1}{\mu} \int_0^t \left( \int_{\bdry{\omegacap{z_1(\tau)}}} \gamma(u_2(\tau, y)) \nu_y \sid{y} 
            - \int_{\bdry{\omegacap{z_2(\tau)}}} \gamma(u_2(\tau, y)) \nu_y \sid{y} \right) \id\tau.        
    \end{align*}
    By using the mean value theorem, we have
    \begin{align*}
       \abs{I_1}\le &\frac{\beta_2}{\mu} \int_0^t \int_{\omegacap{z_1(\tau)}} \abs{\tilu(\tau,y)}\abs{\nabla u_1(\tau,y)} \id y \id \tau\\
       \le &\frac{\beta_2}{\mu}\norm{ u_1}{C^0([0,T];W^{1,\infty}(\Omega))}\int_0^t\int_{(z_1(\tau)+\omega)\cap\Omega}|\tilde u(\tau,y)|\,dy\,d\tau\\
       \le & \frac{\beta_2 T}{\mu}\norm{ u_1}{C^0([0,T];W^{1,\infty}(\Omega))}\,\norm{\tilde u}{C^0([0,T];L^1(\Omega))},
       \end{align*}
    where $\tilu := u_1 - u_2$.
    Here, \cref{lemma:FS} and Young's inequality, we have
    $$
    \norm{ u_1}{C^0([0,T];W^{1,\infty}(\Omega))}\le C(\norm{ u_0}{L^\infty(\Omega)}+F_0T).
    $$
    Also, by using \cref{lemma:diff_char}, a similar argument gives the estimate 
    \[
        \norm{\tilu}{C^0([0,T];L^1(\Omega))} \le CF_0T\rc \norm{z_1 - z_2 }{C^0([0,T])}.
    \]
    Hence
    \begin{equation}\label{eq:estimate_I1}
        \abs{I_1} \le \frac{C\beta_2F_0T^2\rc}{\mu}  \left(\norm{u_0}{L^\infty(\Omega)} + F_0T\right)\norm{z_1 - z_2}{C^0([0,T])}.
    \end{equation}
    
    Next by an argument similar to $I_1$, we have
    \[
        \abs{I_2} \le \frac{\beta_1}{\mu} \int_0^{t} \int_{\omegacap{z_1(\tau)}} \abs{\nabla \tilu(\tau,y)}\id y \id \tau.
    \]
    From the estimate~\eqref{eq:Gamma-1} of \cref{lemma:FS}, we have
    \[
        \begin{aligned}
            \int_0^T \int_\Omega \abs{\nabla_x\Gamma(t,x;s,y)} \id y \id s \le C\sqrt{T} & \mbox{ uniformly in $(t,x)$}, \\
            \int_0^T \int_\Omega \abs{\nabla_x\Gamma(\tau,x;s,y)} \id x \id \tau \le C\sqrt{T} & \mbox{ uniformly in $(s,y)$}.
        \end{aligned}
    \]
    Then Young's inequality and \cref{lemma:diff_char} yield that
    \[
        \norm{\nabla\tilu}{L^1((0,t)\times \Omega)} \le CF_0\sqrt{T}\rc \norm{z_1 - z_2}{C^0([0,T])},
    \]
    and hence we obtain
    \begin{equation}\label{eq:estimate_I2}
        \abs{I_2} \le \frac{C\beta_1F_0\sqrt{T}\rc}{\mu}\norm{z_1 - z_2}{C^0([0,T])}.
    \end{equation}
    
    Finally we estimate $I_3$.
    By integration by parts, we have
    \[
        I_3 %
        = \frac{F_0}{\mu} \int_0^t \int_\Omega \left( \mathbf{1}_{\omegacap{z_1(\tau)}} - \mathbf{1}_{\omegacap{z_2(\tau)}} \right) \gamma'(u_2(\tau,y)) \nabla u_2(\tau,y) \id y \id \tau.
    \]
    Then, by using \cref{lemma:diff_char}, we have
    \begin{equation}\label{eq:estimate_I3}
        \begin{split}
            \abs{I_3} &\le \frac{F_0}{\mu} \int_0^t \int_\Omega \abs{ \mathbf{1}_{\omegacap{z_1(\tau)}} - \mathbf{1}_{\omegacap{z_2(\tau)}}} \abs{\gamma'(u_2(\tau,y))} \abs{\nabla u_2(\tau,y)} \id y \id \tau \\
            &\le \frac{\beta_1F_0}{\mu} \int_0^{t} \int_{\Omega} \norm{\nabla u_2}{C^0([0,T];L^\infty(\Omega))}\abs{ \mathbf{1}_{\omegacap{z_1(\tau)}} - \mathbf{1}_{\omegacap{z_2(\tau)}}} \id y \id \tau \\
            &\le \frac{C \beta_1F_0}{\mu} \int_0^{t} \int_{\Omega} \norm{\nabla u_2}{C^0([0,T];L^\infty(\Omega))} \abs{ \mathbf{1}_{\omegacap{z_1(\tau)}} - \mathbf{1}_{\omegacap{z_2(\tau)}}} \id y \id \tau \\
            &\le \frac{C \beta_1F_0 T \rc}{\mu}  \left(\norm{u_0}{L^\infty(\Omega)} + F_0T\right) \norm{z_1-z_2}{C^0([0,T])}.
        \end{split}
    \end{equation}
    
    \medskip
    Thus by combining~\eqref{eq:estimate_I1},~\eqref{eq:estimate_I2} and~\eqref{eq:estimate_I3}, we see that
    \begin{equation}
        \begin{split}
            &\norm{\Phi(z_1) - \Phi(z_2)}{C^0([0,T])} \\
            &\le \frac{C\rc F_0\sqrt{T}}{\mu}\norm{z_1 - z_2}{C^0([0,T])} \left\{\beta_1 + (\beta_1 + \beta_2 T)\sqrt{T} \left(\norm{u_0}{L^\infty(\Omega)} + F_0T\right)\right\}.
        \end{split}
    \end{equation}
    
    Now for $0\le t_1<t_2\le T$, we will estimate
    $\delta_{t_2,t_1}(\Phi(z_1)-\Phi(z_2))$.
    Since
    \begin{equation}\label{delta of the difference two Phi(z)}
    \begin{aligned}
    &\delta_{t_2,t_1}(\Phi(z_1)-\Phi(z_2)) \\
    &= \frac{1}{\mu} \int_{t_1}^{t_2}\Bigg[ \int_{\bdry{(\omegacap{z_1(\tau)})}} \gamma(u_1(\tau, y)) \nu_y \sid{y} \\
    &\phantom{=\frac{1}{\mu} \int_{t_1}^{t_2}} - \int_{\bdry{(\omegacap{z_2(\tau)})}} \gamma(u_2(\tau, y)) \nu_y \sid{y} \Bigg] \id \tau,
    \end{aligned}
    \end{equation}
    we can estimate $\delta_{t_2,t_1}(\Phi(z_1)-\Phi(z_2))$ by almost repeating the argument used to estimate $\Phi(z_1)(t)-\Phi(z_2)(t)$.
    The estimate will follow by just replacing $T$ in the previous estimates by $|t_1-t_2|$. The only Therefore
    we have
    \begin{equation}\label{1/2 Holder seminorm}
    \begin{aligned}
        &\abs{\delta_{t_1,t_2}(\Phi(z_1)-\Phi(z_2))} \\
        &\le \frac{C\beta_1 F_0{\rc}}{\mu} \abs{t_1 - t_2}^{1/2}  \left\{ \beta_1 + (\beta_1 + \beta_2)\sqrt{T} (\norm{ u_0}{L^\infty(\Omega)}+F_0T) \right\}.
    \end{aligned}
    \end{equation}
    This implies the conclusion immediately by taking sufficiently small $\rc > 0$.
    \end{proof}

\bigskip
Once having the fix point $\xc(t)=z(t)\in\ZMT$ of the mapping $\Phi$, an argument analogous to the proof of \cref{lemma:contraction} can show  that
\begin{equation}\label{xc is C1}
J(t):=\int_{\partial((\xc(t)+\omega)\cap\Omega)}\gamma(u(t,y))\nu_y\id\sigma_y\in C^0([0,T])
\end{equation}
and hence $\xc\in C^1([0,T])$. In fact for $t_1,t_2\in[0,T]$, divide $J(t_1)-J(t_2)$ into
\begin{equation}
J(t_1)-J(t_2)=\tilJ_1 +\tilJ_2+\tilJ_3,
\end{equation}
where
\begin{equation}
    \begin{aligned}
        \tilJ_1 &=\int_{(z(t_1)+\Omega)\cap\Omega}\big(\gamma'(u(t_1,y))-\gamma'(u(t_2,y))\big)\nabla u(t_1,y)\id y, \\
        \tilJ_2 &=\int_{(z(t_1)+\omega)\cap\Omega}
    \gamma'(u(t_2,y))\big(\nabla u(t_1,y)-\nabla u(t_2,y)\big)\id y, \\
        \tilJ_3 &= \int_{(z(t_1)+\omega)\cap\Omega}\gamma'(u(t_2,y))\nabla u(t_2,y)\id y - \int_{(z(t_2)+\omega)\cap\Omega}\gamma'(u(t_2,y))\nabla u(t_2,y)\id y.
    \end{aligned}
    \end{equation}
Then $\tilJ_\ell\,(\ell=1,2,3)$ are estimated as follows:
\begin{equation}
    \begin{array}{ll}
      \abs{\tilJ_1}\le\beta_2\norm{\nabla u(t_1,\cdot)}{L^\infty(\Omega)}\norm{ u(t_1,\cdot)-u_2(t_2,\cdot)}{L^\infty(\Omega)} ,\\
      \abs{\tilJ_2}\le\beta_1\norm{\nabla u(t_1,\cdot)-\nabla u(t_2,\cdot)}{L^\infty(\Omega)},\\
      \abs{\tilJ_3}\le4\beta_1\rc\norm{\nabla u(t_2,\cdot)}{L^\infty(\Omega)}\abs{z(t_1)-z(t_2)}.
    \end{array}
\end{equation}
Since $u\in C([0,T];W^{1,\infty}(\Omega))$ and
$z\in C^{1/2}([0,T])$, $J(t_1)\rightarrow J(t_2)$ as $t_1\rightarrow t_2$ and hence $J(t)\in C^0([0,T])$.

So far we have obtained that $(\xc(t),u(t,x):=u(t,x;z))$ with $z(t)=\xc(t)$ is a solution of the initial boundary value problem \eqref{eq:diffusion}--\eqref{eq:initial} satisfying all the properties given in \cref{thm:well-posedness} except the non-negativity. 

\medskip
Next we prove the uniqueness of the solution $(u,\xc)$.
Let $(u^{(j)},\xc^{(j)})$ be two solutions of~\eqref{eq:diffusion}--\eqref{eq:initial} with regularity~\eqref{regularity}. Set $v := u^{(1)}-u^{(2)}$ and $\yc := \xc^{(1)}-\xc^{(2)}$.
Then $(v,\yc)$ solves the following equations:
\begin{align}
	& \pd{t}v = D \triangle v - \alpha v + f_{\rc}(\cdot-\xc^{(1)}) - f_{\rc}(\cdot -\xc^{(2)}) & & \mbox{in}\ Q_T := (0,T) \times \Omega,  \label{eq:v-diffusion}\\
    & \pdnormal v = 0  & & \mbox{on}\ (0,T) \times \bdry{\Omega},  \\
    & v(0,\cdot) = 0 & & \mbox{in}\ \Omega, \\
	& \mu \diffop{1}{}{t}\yc = \int_{\bdry{((\xc^{(1)}+\omega)\cap\Omega)}} \gamma(u^{(1)}(\cdot,\eta))\nu_\eta \sid{\eta} & & \label{eq:yc-ode}\\
	& \phantom{\mu \diffop{1}{}{t}\yc =} - \int_{\bdry{((\xc^{(2)}+\omega)\cap\Omega)}} \gamma(u^{(2)}(\cdot,\eta))\nu_\eta  \sid{\eta} & & \mbox{in}\ (0,T), \nonumber\\
	& \yc(0) = 0.
\end{align}
By \cref{representation}, we have 
\begin{align}
    \norm{v(t,\cdot)}{L^1(\Omega)} &\le CF_0 \rc t\norm{\yc}{C^0([0,t])}, \label{eq:v0}\\
    \norm{\nabla v(t,\cdot)}{L^1(\Omega)} &\le CF_0\rc\sqrt{t}\norm{\yc}{C^0([0,t])}. \label{eq:v1}
\end{align}
Analogous to the proof of~\cref{lemma:contraction}, we write the right hand side of~\eqref{eq:yc-ode} as the sum $K_1(t) + K_2(t) + K_3(t)$ with
\begin{align*}
    K_1(t) &:= \int_{\omegacap{\xc^{(1)}(t)}} \left[\gamma'(u^{(1)}(t, y)) - \gamma'(u^{(2)}(t, y))\right]\nabla u^{(1)}(t,y) \id y, \\
    K_2(t) &:= \int_{\omegacap{\xc^{(1)}(\tau)}} \gamma'(u^{(2)}(t,y)) \nabla v(t,y) \id y, \\
    K_3(t) &:= \left( \int_{\bdry{\omegacap{\xc^{(1)}(t)}}} \gamma(u^{(2)}(t, y)) \nu_y \sid{y} 
    - \int_{\bdry{\omegacap{\xc^{(2)}(t)}}} \gamma(u^{(2)}(t, y)) \nu_y \sid{y} \right).
\end{align*}
By the mean value theorem, we have
\[
    \norm{\gamma'(u^{(1)}(t, \cdot)) - \gamma'(u^{(2)}(t, \cdot))}{L^1(\Omega)}
    \le \beta_2 \norm{v(t,\cdot)}{L^1(\Omega)},\,\,
    t\in[0,T].
\]
Hence, applying the estimate~\eqref{eq:v0}, we obtain
\begin{equation}\label{J_1 estimate}
    \abs{K_1(t)} \le C\beta_2 \rc t F_0 \left(\norm{u_0}{L^\infty(\Omega)} + F_0T\right) \norm{\yc}{C^0([0,t])},\ t\in[0,T].
\end{equation}
From the estimate~\eqref{eq:v1}, it follows that
\begin{equation}
    \abs{K_2(t)} \le C\beta_1 F_0 \rc \sqrt{t} \norm{\yc}{C^0([0,t])},\,\,t\in[0,T].
\end{equation}

Similarly to~\eqref{eq:estimate_I3}, $K_3(t)$ is estimated as
\begin{equation}
    \abs{K_3(t)} \le C \beta_1 \rc t \left(\norm{u_0}{L^\infty(\Omega)} + F_0T\right) \norm{\yc}{C^0([0,t])},\,\,t\in[0,T].
\end{equation}
Thus we see that for any $t\in[0,T]$ we have
\begin{equation}\label{y_c estimate}
    \begin{aligned}
        \yc(t)&=\frac{1}{\mu} \int_0^t (K_1(\tau) + K_2(\tau) + K_3(\tau)) \id \tau \\
        &\le \frac{C\rc}{\mu} \max\{\beta_1,\beta_2\} \\
        &\phantom{=} \times \int_0^t \left( \tau (1+F_0) \left(\norm{u_0}{L^\infty(\Omega)} + F_0T\right) + F_0\sqrt{\tau} \right)\norm{\yc}{C^0([0,\tau])}\id \tau.
    \end{aligned}
\end{equation}
Hence from Gronwall's inequality, it follows that $\yc=0$ in $[0,T]$, and by \eqref{eq:v0} we have $v = 0$. This completes the proof of the uniqueness. 

\medskip
Finally we will prove the non-negativity of $u$. To begin with, we replace the initial data $u_0(x)$ by $u_0^\eta:=u_0(x)+\eta$ with an arbitrarily small fix constant $\eta>0$. Note that $u_0^\eta\in W^{1,\infty}(\Omega)$ satisfies the compatibility condition and satisfies
\begin{equation}\label{strict positivity}
u_0^\eta\ge\eta\,\,\text{on}\,\,\overline\Omega.
\end{equation}
Denote the corresponding solution of the 
initial boundary value problem \eqref{eq:diffusion}--\eqref{eq:initial} by $(u^\eta(t,x), \xc^\eta(t))$. Let $v^\eta=u^\eta-u$ and $y^\eta=\xc^\eta-\xc$. Then we can almost repeat the argument used to prove the uniqueness to show that
\begin{equation}\label{estimate of v^eta}
\norm{v^\eta}{L^\infty([0,T]; W^{1,1}(\Omega))}=O(\eta),\,\,0<\eta\ll 1.
\end{equation}
In fact the estimate corresponding to \eqref{eq:v0} changes to
\[
\norm{v^\eta(t,\cdot)}{L^1(\Omega)}\le C\left(\eta+F_0 \rc t\norm{\yc}{C^0([0,t])}\right),\ t\in[0,T].
\]
Consequently we need to change the estimate corresponding to \eqref{J_1 estimate} to
\[
\abs{J_1^\eta(t)}\le C\beta_2\left(\norm{ u_0}{L^\infty(\Omega)}+F_0T\right)\left(\eta+F_0{\rc}t\norm{ \yc^\eta}{C([0,t])}\right),\ t\in[0,T].
\]
Hence the estimate corresponding to \eqref{y_c estimate} changes to
\[
    \begin{aligned}
        \abs{\yc^\eta(t)} &\le \frac{C\rc}{\mu} \max\{\beta_1,\beta_2\} \\
        &\phantom{\le} \times \int_0^t \{ \tau (1+F_0) (\norm{u_0}{L^\infty(\Omega)}
    + F_0T)+F_0\sqrt{\tau} \}\norm{\yc}{C^0([0,\tau])}\id \tau \\
        &\phantom{\le} +C\beta_2\eta(\norm{ u_0}{L^\infty(\Omega)}+F_0T)t
    \end{aligned}
\]
for any 
$t\in[0,T]$.
Then Gronwall's inequality immediately implies \eqref{estimate of v^eta}.
Now we invoke $\norm{ v^\eta}{C^0([0,T];W^{1,\infty}(\Omega))}$ for $0<\eta\ll 1$ is bounded to show 
$\norm{ v^\eta}{C^0([0,T];L^\infty(\Omega))}\rightarrow 0$ as $\eta\rightarrow 0$. By the Garliardo--Nirenberg inequality, there exist positive constants $c_1,\,c_2$ such that for any $t\in[0,T]$
$$
\norm{ v^\eta(t,\cdot)}{L^\infty(\Omega)}\le c_1\norm{\nabla v^\eta}{L^\infty(\Omega)}^{2/3}\norm{ v^\eta}{L^1(\Omega)}^{1/3}+c_2\norm{ v^\eta}{L^1(\Omega)}.
$$ 
This immediately implies $\norm{ v^\eta}{C^0([0,T];L^\infty(\Omega))}\rightarrow 0$ as $\eta\rightarrow 0$.
Therefore it is enough to prove the non-negativity of $u^\eta$. 

Let $0\le\rho_\epsilon(x)\in C^\infty_0(\{\abs{x}\le\epsilon\})$ with $0<\epsilon\ll 1$ be a mollifier.
Fix any small $\epsilon_0>0$.
Consider $u_\epsilon^\eta=u^\eta\ast\rho_\epsilon$ for $0<\epsilon<\epsilon_0$.
Then each $u_\epsilon^\eta$ satisfies
\begin{equation}
    (\partial_t-D\Delta+\alpha)u_\epsilon^\eta=f_{\rc}(\cdot-\xc)\big|_{Q_T}\ast\rho_\epsilon:=g_\epsilon\,\,\text{in}\,\,Q_T^{\epsilon_0},
\end{equation}
where $Q_T^{\epsilon_0}=(0,T)\times\Omega_{\epsilon_0}$ with $\Omega_\epsilon:=\{x\in\Omega: \abs{x-y}>\epsilon,\, y\in\partial\Omega\}$.
By $u^\eta\in C^1((0,T];L^p(\Omega))$, the mollified function $u_\epsilon^\eta$ is $C^1$ with respect to $t$ and $C^2$ with respect to $x$ in
$Q_T^{\epsilon_0}$ and $g_\epsilon\ge0$ in $Q_T^{\epsilon_0}$.
Further $u_\epsilon^\eta(\delta,\cdot)>0$ in $\Omega_\epsilon$ for any $0<\delta\ll 1$.
Hence by the maximum principle (see \cite[Chapter 2, Section 1, Theorem 1]{Friedman} ), we have $u_\epsilon^\eta\ge0$ in $(\delta,T)\times\Omega_\epsilon$.
Then by $u^\eta\in C^0([0,T];W^{1,\infty}(\Omega))$, $u_\epsilon^\eta(t,x)\rightarrow u^\eta(t,x)$ as $\epsilon\rightarrow 0$ at every $(t,x)\in (\delta,T)\times\Omega_{\epsilon_0}$.
Hence $u^\eta\ge0$ in $(\delta,0)\times\Omega_{\epsilon_0}$.
Since $\delta$ and $\epsilon_0$ are arbitrarily small positive numbers and $u^\eta$ is continuous in $[0,T]\times\overline\Omega$,
we have $u^\eta\ge0$ on $[0,T]\times\overline\Omega$.
This completes the proof of \cref{thm:well-posedness}.

\medskip
\section{Derivation of the short time approximate solution}\label{sec:asymptotics}${}$\\
In this section, we will derive the short time approximate solution $(\tilu,\tilxc)$ stated in \cref{thm:main}. Let us start this formal derivation by introducing the scaled source function $F(x)$ via
\begin{equation}
    f_{\rc}(x) = F\left(\frac{x}{\rc}\right).
\end{equation}
Also, following the idea of~\cite{Constantinescu2010,Nakagawa2014}, we introduce the ``microscopic scaling'' around the fixed time $t_0$ and $x_0 := \xc(t_0)$ by
\begin{equation}
    x = x_0 + \rc(\xi - x_0),\  t = t_0 + \rc^2 (\tau - t_0)
\end{equation}
which maps a neighborhood of $(x_0,t_0)$ in the $(x,t)$-space to a very large neighborhood of $(x_0,t_0)$ in the $(\xi,\tau)$-space likewise a microscope due to the smallness of $\rc$.
Further by setting $\overline{u}_0:=u(x_0,t_0)$, denote the re-scaled concentration $U(\xi,\tau)$ and the position $\xi_c(\tau)$ of the camphor by
\begin{equation}
    \begin{split}
        U(\xi,\tau) &:= \frac{u(x_0 + \rc(\xi - x_0), t_0 + \rc^2 (\tau - t_0)) - \overline{u}_0}{\rc^2}, \\
        \xic(\tau) &:= x_0 + \frac{\xc(t_0 + \rc^2 (\tau - t_0))-x_0}{\rc},
    \end{split}
\end{equation}
which immediately implies $U(x_0,t_0)=0$ and $\xic(t_0) = x_0$.
Then $U$ and $\xic$ satisfy the following equation:
\begin{align}
    & \pd{\tau}U = D \triangle_\xi U - \alpha (u_0 + \rc^2 U) + F(\xi-\xic(\tau)), \label{eq:U}\\
    & \mu \diffop{1}{}{\tau}\xic = \rc^2 \int_{\bdry{B_1(0)}} \gamma(u_0 + \rc^2U(\cdot,\xic+\eta))\nu_\eta \sid{\eta}, \label{eq:xic}
\end{align}
where $B_1(0)$ is the unit open disk centered at $0$.

Now we assume that $U$ has expansions in $\rc$:
\begin{equation}
        U = U_0 + \rc^2 U_2 + \dots.
\end{equation}
Substituting the expansion of $U$ into the diffusion equation \eqref{eq:U} and setting the terms with same powers of $\rc$ equal zero,
we have
\begin{equation}\label{eq:U0_and_U2}
    \begin{split}
        \pd{\tau} U_0 &= D\triangle_\xi U_0 - \alpha u_0 + F(\xi - \xic(\tau)), \\
        \pd{\tau} U_2 &= D\triangle_\xi U_2 - \alpha U_0.
    \end{split}
\end{equation}
Then using the heat kernel
\begin{equation}
    \Gamma_\tau = \frac{1}{4\pi D \tau} \exp \left[ - \frac{\abs{\xi}^2}{4 D \tau}\right],\,\,\tau>0,
\end{equation}
$U_0$ and $U_2$ have the following representations:
\begin{equation}
    \begin{split}
        U_0(\tau,\cdot) &= \Gamma_{\tau - t_0} \ast \Utzero + \int_{t_0}^{\tau} \Gamma_{\tau - \tau'} \ast F(\cdot - \xic(\tau'))\id \tau' - \alpha u_0 (\tau - t_0), \\
        U_2(\tau,\cdot) &= -\alpha \int_{t_0}^{\tau} \Gamma_{\tau - \sigma} \ast U_0(\tau',\cdot) \id \tau',
    \end{split}
\end{equation}
where $\Utzero := U(t_0,\cdot) = u(t_0,\cdot)$ and ``$\ast$'' denotes the convolution with respect to the $\xi$ variable.
Further, by applying the semigroup property of the heat kernel, we have
\begin{equation}
\begin{aligned}
    U_2(\tau,\cdot)
    &= -\alpha\int_{t_0}^{\tau} \Gamma_{\tau - \tau'} \ast \left(\Gamma_{\tau' - t_0} \ast \Utzero\right) \id \tau' \\
    &\phantom{=} - \alpha\int_{t_0}^{\tau} \Gamma_{\tau - \tau'} \ast \left( \int_{t_0}^{\tau'} \Gamma_{\tau' - \tau''} \ast F(\cdot - \xic(\tau''))\id \tau'' \right) \id \tau' \\
    &\phantom{=} + \frac{1}{2}\alpha^2 u_0 (\tau - t_0)^2 \\
    &= -\alpha\int_{t_0}^{\tau} \Gamma_{\tau - t_0} \ast \Utzero \id \tau' %
    - \alpha\int_{t_0}^{\tau} \left( \int_{t_0}^{\tau'} \Gamma_{\tau - \tau''} \ast F(\cdot - \xic(\tau''))\id \tau'' \right) \id \tau' \\
    &\phantom{=} + \frac{1}{2}\alpha^2 u_0 (\tau - t_0)^2 \\
    &= -\alpha(\tau - t_0) \Gamma_{\tau - t_0} \ast \Utzero %
    - \alpha\int_{t_0}^{\tau} (\tau - \tau') \Gamma_{\tau-\tau'} \ast F(\cdot - \xic(\tau'))\id \tau' \\
    &\phantom{=} + \frac{1}{2}\alpha^2 u_0 (\tau - t_0)^2.
\end{aligned}
\end{equation}

Now recall the integral formula
\begin{displaymath}
    \frac{1}{\sqrt{\pi\lambda}} \int_{-\infty}^{\infty} \exp\left(-\frac{y^2}{\lambda} \right) y^\ell \id y =
    \left\{
        \begin{aligned}
                & 0 & &\mbox{if $\ell$ is odd,} \\
                & 1 & &\mbox{if $\ell = 0$,} \\
                & \left(\frac{\lambda}{2}\right)^m (2m-1)!! & &\mbox{if $\ell = 2m\,(m = 1,2,\dots)$}.
        \end{aligned}
    \right.
\end{displaymath}
Then the convolution $\Gamma_{\tau - t_0} \ast \Utzero$ can be written in the form 
\begin{displaymath}
    \Gamma_{\tau - t_0} \ast \Utzero = \Utzero + D(\tau - t_0) \triangle_\xi \Utzero + \frac{1}{2} D^2 (\tau - t_0)^2 \triangle_\xi^2 \Utzero + o((\tau - t_0)^2)\, (\tau \to t_0).
\end{displaymath}
Similar argument yields that
\begin{equation*}
    \begin{aligned}
    \Gamma_{\tau - \tau'} \ast F(\cdot - \xic(\tau')) &= F(\cdot - \xic(\tau')) + D(\tau - \tau')\triangle_\xi F(\cdot - \xic(\tau')) \\
    &\phantom{=} + \frac{1}{2} D^2 (\tau - \tau')^2 \triangle_\xi^2 F(\cdot - \xic(\tau')) + o((\tau - t_0)^2)\, (\tau \to t_0).
    \end{aligned}
\end{equation*}
Hence $U_0$ and $U_2$ admit the approximations given as
\begin{equation}
    \begin{split}
        U_0 &= \Utzero %
        + D(\tau - t_0) \triangle_\xi\Utzero %
        + \frac{1}{2}D^2(\tau - t_0)^2 \triangle_\xi^2  \Utzero \\
        &\phantom{==}+ \int_{t_0}^{\tau} \left(%
            F(\cdot - \xic(\tau')) %
            + D(\tau - \tau') \triangle_\xi F(\cdot - \xic(\tau')) %
            \right)\id \tau' \\
        &\phantom{==} - \alpha u_0 (\tau - t_0) + o((\tau - t_0)^2), \\
        U_2 &= %
        -\alpha (\tau - t_0) \Utzero %
        -\alpha (\tau - t_0)^2 \triangle_\xi\Utzero \\
        &\phantom{==}- \alpha \int_{t_0}^{\tau} (\tau - \tau') F(\cdot - \xic(\tau'))\id \tau' + \frac{1}{2}\alpha^2 u_0 (\tau - t_0)^2 + o((\tau - t_0)^2).%
    \end{split}
\end{equation}
Then the approximate solution $\tilU(\tau,\xi) = u_0 + \rc^2(U_0 + \rc^2 U_2)$ is given as
\begin{equation}\label{tilde U}
    \begin{split}
        \tilU(\tau,\xi) &= \rc^2\Utzero + \rc^2(\tau - t_0)(D\triangle - \alpha\rc^2)\Utzero + \rc^2(\tau - t_0)^2 \left(\frac{1}{2}D^2 \triangle^2 - \alpha \rc^2 \triangle \right)\Utzero \\
        &\phantom{=} + \rc^2 \int_{t_0}^{\tau} \left[ F(\xi - \xic(\tau')) + (\tau - \tau')(D\triangle - \rc^2 \alpha)F(\xi - \xic(\tau')) \right] \id \tau' \\
        &\phantom{=} + u_0 - \alpha \rc^2 u_0(\tau - t_0) + \frac{1}{2} \alpha^2 \rc^4u_0 (\tau - t_0)^2 + o((\tau - t_0)^2)
    \end{split}
\end{equation}
Translating \eqref{tilde U} in terms of the original variable $(x,t)$, we obtain
\begin{equation}
    \begin{split}
        \tilu(t,x) &= \rc^2u^{t_0} + \rc^2(t - t_0)(D\triangle - \alpha)u^{t_0} + \rc^2(t - t_0)^2 \left(\frac{1}{2}D^2 \triangle^2 - \alpha \triangle \right)u^{t_0} \\
        &\phantom{=} + \int_{t_0}^{t} \left[ f_{\rc}\left(\frac{x - \xic(t')}{\rc}\right) + (t - t')(D\triangle - \alpha)f_{\rc}\left(\frac{x - \xic(t')}{\rc}\right) \right] \id \tau' \\
        &\phantom{=} + u_0 - \alpha u_0(t - t_0) + \frac{1}{2} \alpha^2 u_0 (t - t_0)^2 + o(\rc^{-2}(t - t_0)^2)\,.
    \end{split}
\end{equation}
Then substituting $U = \tilU$ into equation~\eqref{eq:xic} and using the approximation of $\gamma$ given by 
\begin{displaymath}
    \gamma(\tilU) = \gamma(u_0) + \gamma'(u_0) \rc^2(U_0 + \rc^2 U_2), 
\end{displaymath}
we obtain the following ODE for $\xc(t)$:
\begin{displaymath}
    \begin{array}{ll}
    \displaystyle \mu \diffop{1}{}{\tau} \xic \\
    \displaystyle \quad = - \rc^4 \gamma_1 \{\Vtzero(\xic(\tau)) + (\tau - t_0)(D\triangle - \alpha\rc^2)\Vtzero(\xic(\tau))\\
    \displaystyle \phantom{=}\quad + (\tau - t_0)^2 \left(\frac{1}{2}D^2 \triangle^2 - \alpha \rc^2 \triangle \right)\Vtzero(\xic(\tau)) \} \\
    \displaystyle \phantom{=}\quad - \rc^4 \gamma_1 \left( \int_{t_0}^{\tau} \left[ \Phi(\xic(\tau) - \xic(\tau')) + (\tau - \tau')(D\triangle - \rc^2 \alpha)\Phi(\xic(\tau) - \xic(\tau')) \right] \id \tau' \right) \\
    \displaystyle \phantom{=}\quad + o(\rc^2(\tau - t_0)^2),
    \end{array}
\end{displaymath}
where we have introduced
\begin{equation*}
    \begin{split}
        \Vtzero(\xi) &:=  \int_{\bdry{B_1(0)}} \Utzero(\xi + \eta) \nu_\eta \sid{\eta},\\
        \Phi(\xi) &:= \int_{\bdry{B_1(0)}} F(\xi + \eta) \nu_\eta \sid{\eta}. 
    \end{split}
\end{equation*}
In terms of the original variable $(t,x)$ this becomes
\begin{displaymath}
    \begin{aligned}
        \mu \diffop{1}{}{t} \xc(t) &= %
        - \gamma_1 \rc \left( v^{t_0}(\xc(t)) + (t - t_0)A_1v^{t_0}(\xc(t)) + (t - t_0)^2 A_2 v^{t_0}(\xc(t))\right) \\
        &\phantom{=}%
        - \gamma_1 \rc \left( \int_{t_0}^{t} \left[ \varphi\left(\xc(t) - \xc(t')\right) + (t - t')A_1\varphi\left(\xc(t) - \xc(t')\right) \right] \id t' \right) \\
    &\phantom{=}+ o(\rc^{-2}(t - t_0)^2)\,,
    \end{aligned}
\end{displaymath}
where
\begin{displaymath}
    \begin{aligned}
        v^{t_0}(x) &:= \int_{\bdry{B_{\rc}(0)}} U(t_0,x + y) \nu_y \id \sigma_y, \\
        \varphi(x) &:= \int_{\bdry{B_{\rc}(0)}} f_{\rc} \left(x + y\right)\nu_y \id \sigma_y, \\
        A_1 &:= D\triangle_x - \alpha, \\
        A_2 &:= \frac{1}{2}D^2\triangle_x^2 - \alpha \triangle_x.
    \end{aligned}
\end{displaymath}
\section{Error estimate for the short time approximate solution}\label{sec:error}${}$\\

In this section, we give the error estimate for the short time approximate solution $(\tilu,\tilxc)$ to complete the proof of \cref{thm:main}.
First of all from~\eqref{eq:U0_and_U2}, observe that $\tilU$ satisfies the following equations:
\begin{equation}
	\begin{split}
    	&\pd{\tau} \tilU = D \triangle_\xi \tilU - \alpha(u_0 + \rc^2 \tilU) + \alpha \rc^4 U_2 - F(\cdot - \xic(\tau)), \\
        &\tilU(\cdot,t_0) = \Utzero.
    \end{split}
\end{equation}
Hence $R := \rc^{-2} (U - \tilU)$ and $\zeta := \rc^{-2}(\xic - \tilxic)$ satisfy
the Cauchy problem
\begin{equation}
	\begin{split}
		\pd{\tau} R &= D \triangle_\xi R - \alpha R - \alpha \rc^2 U_2 - \rc^{-2}\left[F(\cdot - \xic(\tau)) - F(\cdot - (\xic(\tau)-\rc^2 \zeta(\tau)))\right], \\
        \mu \diffop{1}{}{\tau} \zeta &= - \rc^2 \beta_1 \int_{\bdry{B_1(0)}} R(\xic+\eta,\cdot)\nu_\eta \id s_\eta \\
        &\phantom{=} -\beta_1 \int_{\bdry{B_1(0)}} \left[ \tilU(\xic+\eta,\cdot) - \tilU(\xic - \rc^2\zeta + \eta,\cdot) \right]\nu_\eta \id s_\eta,
	\end{split}
\end{equation}
with $R(\cdot,t_0)=\zeta(t_0) = 0$.
In order to estimate the error of our approximate solution, 
it is enough to obtain the uniform boundedness of $\zeta$ and $R$ with respect to $\rc$.

Based on this, transform the above Cauchy problem for $(R,\zeta)$ to the following system of integral equations:
\begin{equation}
\begin{split}
	R(\tau,\cdot) &= \int_{t_0}^{\tau} \int_\Omega \Gamma(\tau,\cdot;s,\eta) \\
	&\phantom{\int_{t_0}^{\tau} \int_\Omega} \times \left[ - \alpha \rc^2 U_2(s,y) + \frac{F(\eta - \xic(s)) - F(\eta - (\xic(s)-\rc^2 \zeta(s)))}{\rc^2} \right] \id \eta \id s,\\
	\zeta(\tau) &= -\frac{\beta_1}{\mu} \int_{t_0}^{\tau} \left(\rc^2\int_{\bdry{B_1(0)}} R(s,\xic+\eta)\nu_\eta \id s_\eta\right. \\
    &\phantom{-\frac{\beta_1}{\mu} \int_{t_0}^{\tau}} \quad \left. + \int_{\bdry{B_1(0)}} \left[ \tilU(s,\xic+\eta) - \tilU(s,\xic - \rc^2\zeta + \eta) \right]\nu_\eta \id s_\eta \right)\id s.
\end{split}
\end{equation}
Then the estimate of $(R,\zeta)$ follows from the unique solvability of this system of integral equations with an estimate which can be shown by the successive approximation argument. More precisely we show the existence of a solution and its estimate by proving the convergence of the sequence $\{(R_n,\zeta_n)\}_{n=0,1,\dots}$ defined by  
\begin{equation}\label{eq:dfn_R_zeta}
	\begin{split}
		R_{n+1}(\tau,\cdot) &= \int_{t_0}^{\tau} \int_\Omega \Gamma(\tau,\cdot;s,\eta) \\
	    &\phantom{\int_{t_0}^{\tau} \int_\Omega} \times \left[ - \alpha \rc^2 U_2(s,y) + \frac{F(\eta - \xic(s)) - F(\eta - (\xic(s)-\rc^2 \zeta_n(s)))}{\rc^2} \right] \id \eta \id s,\\
	    \zeta_{n+1}(\tau) &= -\frac{\beta_1}{\mu} \int_{t_0}^{\tau} \left(\rc^2\int_{\bdry{B_1(0)}} R_{n+1}(s,\xic+\eta)\nu_\eta \id s_\eta\right. \\
        &\phantom{-\frac{\beta_1}{\mu} \int_{t_0}^{\tau}} \quad \left. + \int_{\bdry{B_1(0)}} \left[ \tilU(s,\xic+\eta) - \tilU(s,\xic - \rc^2\zeta_n + \eta) \right]\nu_\eta \id s_\eta \right)\id s
	\end{split}
\end{equation}
with $(R_0,\zeta_0)=(0,0)$. Then estimates similar to \eqref{estimate of R} and \eqref{estimate of zeta} showing the convergence of $\{(R_n,\zeta_n)\}_{n=0,1,\cdots}$ can show that the solution of the system of integral equations is unique, which implies the estimate of $(R,S)$. 

By the definition of $R_{n+1}$, we have
\begin{equation}
    \begin{aligned}
        &\norm{R_{n+1}(\tau,\cdot)}{W^{1,1}(\Omega)} \\
        &\, \le C \sqrt{\tau - t_0} \\
        &\phantom{\le} \, \times \left(\alpha \rc^2 \norm{U_2}{C^0([0,T];L^1(\Omega))}  + \norm{\frac{F(\cdot - \xic) - F(\cdot - (\xic-\rc^2 \zeta_n))}{\rc^2}}{C^0([0,T];L^1(\Omega))}\right) \\
        &\,\le C \sqrt{\tau - t_0} \left(\alpha \rc^2 \norm{U_2}{C^0([0,T];L^1(\Omega))} + \norm{F}{W^{1,1}(\Omega)}\norm{\zeta_n}{C^0([0,T])}\right).
    \end{aligned}
\end{equation}
Here we have used~\cref{lemma:FS} and Young's inequality.
So $\zeta_{n+1}$ can be estimated as
\begin{align*}
    &\abs{\zeta_{n+1}(\tau)} \\
    &\le
	\frac{\beta_1}{\mu} \int_{t_0}^{\tau} \Bigg[\rc^2\int_{\bdry{B_1(0)}} \abs{R_{n+1}(s,\xic+\eta)} \id s_\eta \\
    &\phantom{\le
	\frac{\beta_1}{\mu} \int_{t_0}^{\tau}} %
	+ \int_{\bdry{B_1(0)}}\quad \abs{ \tilU(s,\xic+\eta) - \tilU(s,\xic - \rc^2\zeta_n + \eta) }\nu_\eta \id s_\eta \Bigg]\id s \\
    &\le \frac{\beta_1\pi\rc^2}{\mu} \int_{t_0}^{\tau} \left[\norm{R_{n+1}(s,\cdot)}{W^{1,1}(\Omega)}  + \norm{\tilU(s,\cdot)}{C^1(\Omega)} \abs{\zeta_{n}(s)} \right] \id s \\
    &\le \frac{C\beta_1\pi\rc^2}{\mu} \Bigg[ \int_{t_0}^{\tau} \sqrt{s - t_0} \left(\alpha \rc^2 \norm{U_2}{C^0([0,T];L^1(\Omega))} + \norm{F}{W^{1,1}(\Omega)}\norm{\zeta_n}{C^0([0,T])}\right) \id s \\ 
    &\phantom{\le \frac{C\beta_1\pi\rc^2}{\mu} }\quad %
    + \int_{t_0}^{\tau} \norm{\tilU(s,\cdot)}{C^1(\Omega)} \abs{\zeta_{n}(s)} \id s \Bigg]\\
    &\le \frac{C\beta_1\pi\rc^2}{\mu} M (\tau - t_0) \left( \sqrt{\tau - t_0} + \norm{\zeta_n}{C^0([0,T])} \right),
\end{align*}
where $M := \max \left\{\alpha\norm{U_2}{C^0([t_0,\tau];L^p(\Omega))}, \norm{F}{W^{1,1}}, \norm{\tilU}{C^0([t_0,\tau];W^{2,p}(\Omega)}\right\}$.

Then the sequence $\{(R_n,\zeta_n)\}$ satisfies the following estimates:
\begin{equation}
    \begin{split}
        \norm{\zeta_1}{C^0([t_0,\tau])} &\le \frac{C\beta_1\pi\rc^2}{\mu} M (\tau - t_0)^{3/2}, \\
        \norm{\zeta_{n+1}}{C^0([t_0,\tau])} &\le \frac{C\beta_1\pi\rc^2}{\mu} M (\tau - t_0) \left( \sqrt{\tau - t_0} + \norm{\zeta_n}{C^0([0,T])} \right), \\
        \norm{R_{1}(\tau,\cdot)}{W^{1,1}} &\le CM\rc^2 \sqrt{\tau - t_0} \\
        \norm{R_{n+1}(\tau,\cdot)}{W^{1,1}} &\le CM\sqrt{\tau-t_0}\left(\rc^2+\norm{\zeta_n}{C^0([t_0,\tau])}\right)
    \end{split}
\end{equation}
Hence for arbitrarily fixed $B > 0$, taking sufficiently small $\delta$ satisfying
\begin{equation}\label{eq:rel_dalta_B}
        \delta \le \min \left\{
            \frac{\mu}{CM\beta_1\pi\rc^2},\,
            B^2,\,
            \frac{1}{2A},\,
            \frac{B}{CM(\rc^2+B)}
            \right\}
\end{equation}
we obtain $\norm{\zeta_{n}}{C^0([t_0,\tau])}\le B$ and $\norm{R_{n}(\tau,\cdot)}{W^{1,1}} \le B$ for all $n$ if $0 \le \tau - t_0 \le \delta$.

What is left is to show is the convergence of $\{\zeta_n\}$ in $C^0([t_0,\tau])$.
By~\eqref{eq:dfn_R_zeta}, we have
\[
\begin{aligned}
&R_{n+1}(\tau,\cdot) - R_{n}(\tau,\cdot)\\
&= \int_{t_0}^{\tau}\!\!\int_\Omega \Gamma(t,\cdot;s,y) \left[\frac{F(y - (\xic(s)-\rc^2 \zeta_{n}(s))) - F(y - (\xic(s)-\rc^2 \zeta_{n-1}(s)))}{\rc^2} \right] \id y\id s.
\end{aligned}
\]
Then by the estimate~\eqref{eq:Gamma-0},~\eqref{eq:Gamma-1} and Young's inequality, we have
\begin{equation}\label{estimate of R}
    \begin{split}
        &\norm{R_{n+1}(\tau,\cdot) - R_{n}(\tau,\cdot)}{W^{1,1}(\Omega)} \\
        &\le C\sqrt{\tau - t_0} \norm{\frac{F(\cdot - (\xic-\rc^2 \zeta_{n})) - F(\cdot - (\xic-\rc^2 \zeta_{n-1}))}{\rc^2} }{L^1(t_0,\tau;W^{1,1}(\Omega))} \\
        &\le CM\sqrt{\tau - t_0}\int_{t_0}^{\tau} \abs{\zeta_{n}(s) - \zeta_{n-1}(s)} \id s.
    \end{split}
\end{equation}
Hence by~\eqref{eq:dfn_R_zeta}, we have
\begin{equation}\label{estimate of zeta}
    \begin{split}
        &\abs{\zeta_{n+1}(s)-\zeta_n(s)} \\
        &\le \frac{\beta_1}{\mu} \int_{t_0}^{s} 
        \left( \rc^2 \int_{B_1(0)} \abs{\nabla R_{n+1}(s',\xic+\eta)-\nabla R_{n}(s',\xic+\eta)} \id \eta \right. \\
        &\phantom{\le} \left. + \int_{B_1(0)} \abs{\nabla\tilU(s',\xic- \rc^2\zeta_{n}+\eta) - \nabla\tilU(s',\xic - \rc^2\zeta_{n-1} + \eta)} \id \eta \right)\id s' \\
        &\le \frac{CM\beta_1\pi \rc^2}{\mu}  \int_{t_0}^{s} 
        \sqrt{s' - t_0}\left(\int_{t_0}^{s'}  \abs{\zeta_n(s'')-\zeta_{n-1}(s'')} \id s''\right)\id s' \\
        &\phantom{\le\frac{CM\beta_1\pi \rc^2}{\mu}} + \int_{t_0}^s \abs{\zeta_{n}(s') - \zeta_{n-1}(s')} \id s' \\
        &\le \frac{CM\beta_1\pi \rc^2}{\mu}
        \int_{t_0}^{s}
        \left( 1 + (\tau - t_0)^{1/2} (s - s') \right)
        \abs{\zeta_n(s')-\zeta_{n-1}(s')}\id s'.
    \end{split}
\end{equation}
Finally we obtain that
\begin{displaymath}
    \abs{\zeta_{n+1}(s)-\zeta_n(s)} \le \frac{CM\beta_1\pi \rc^2}{\mu} \cdot \frac{(\tau - t_0)^{n}}{n!}\abs{\zeta_1(s) - \zeta_0(s)}
\end{displaymath}
and the convergence follows from this inequality.
\section{Numerical Simulation}\label{sec:numerical}${}$\\
\begin{figure}[tbp]
    \centering
    \includegraphics[width=\linewidth]{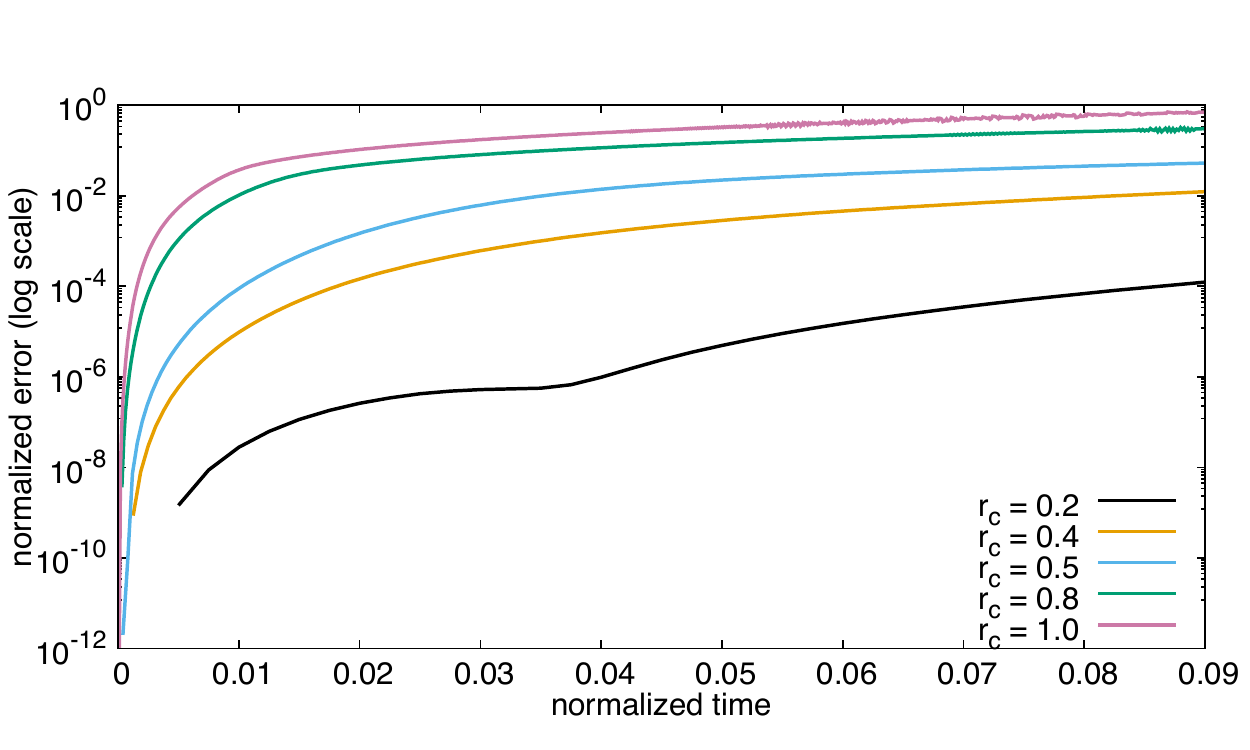}
    \includegraphics[width=\linewidth]{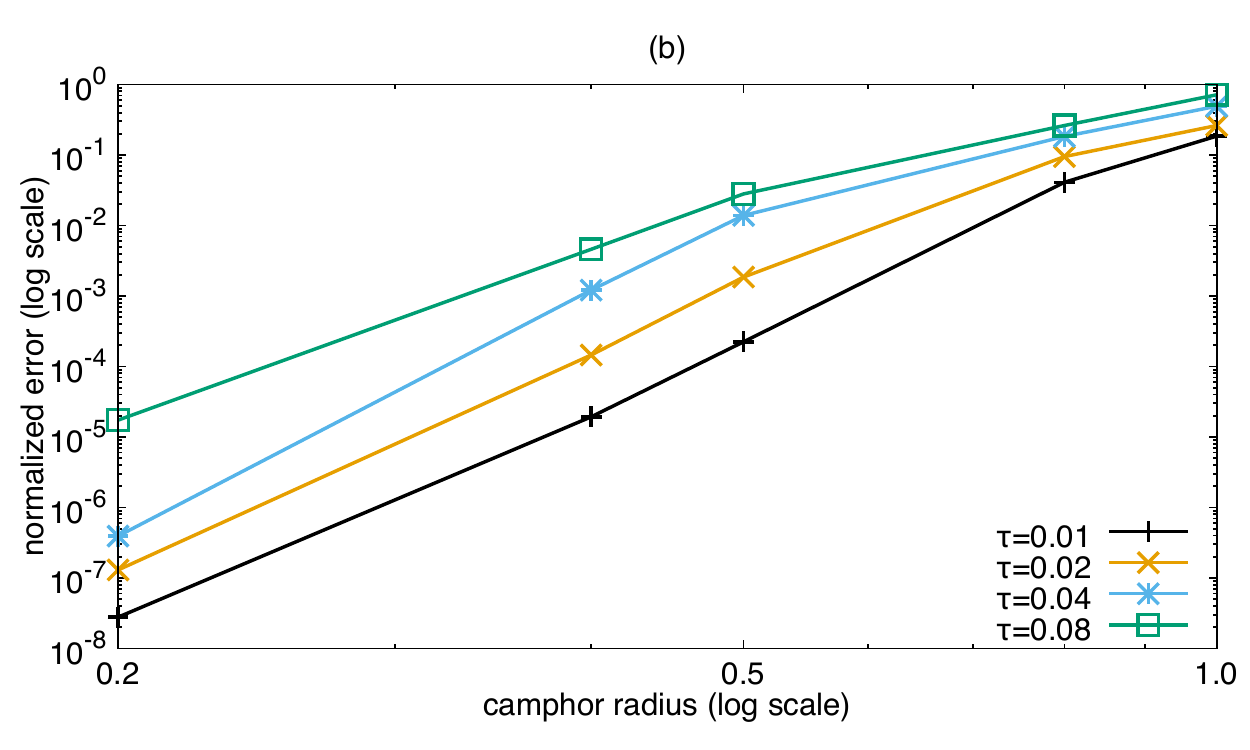}
    \caption{(a) The normalized error $\abs{\xc-\tilxc}/\rc$ of camphor postion between the solution of the original system and the approximate solution with repsect to the normalized time $\tau=t/\rc^2$.
    (b) The normalized error with respect to the camphor radius for fixed normalized times $\tau = 0.01, 0.02, 0.04, 0.08$.
    }
    \label{fig:rc-error}
\end{figure}
\begin{figure}[tbp]
    \includegraphics[width=0.45\linewidth]{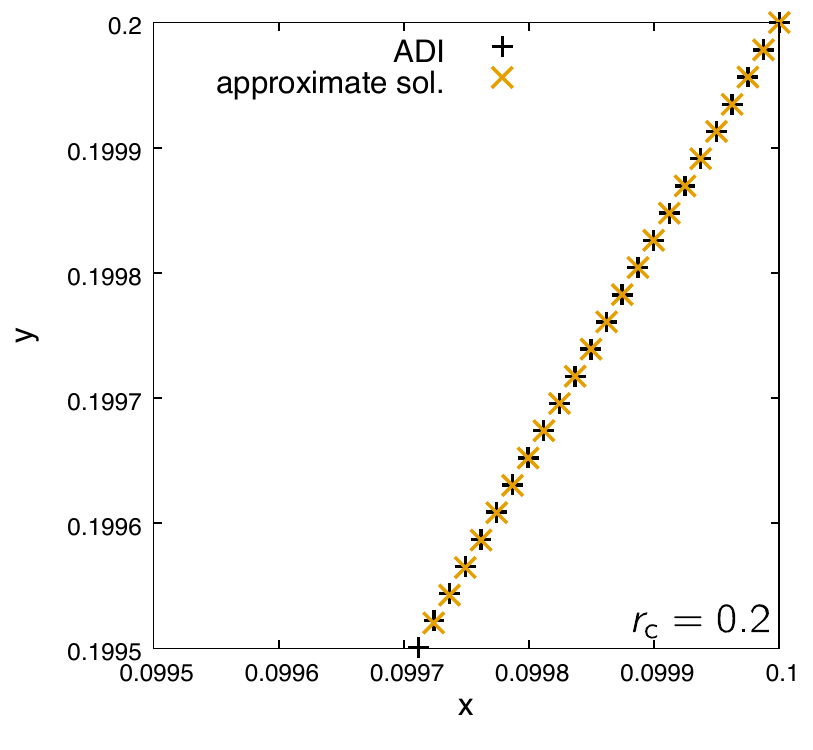}
    \includegraphics[width=0.45\linewidth]{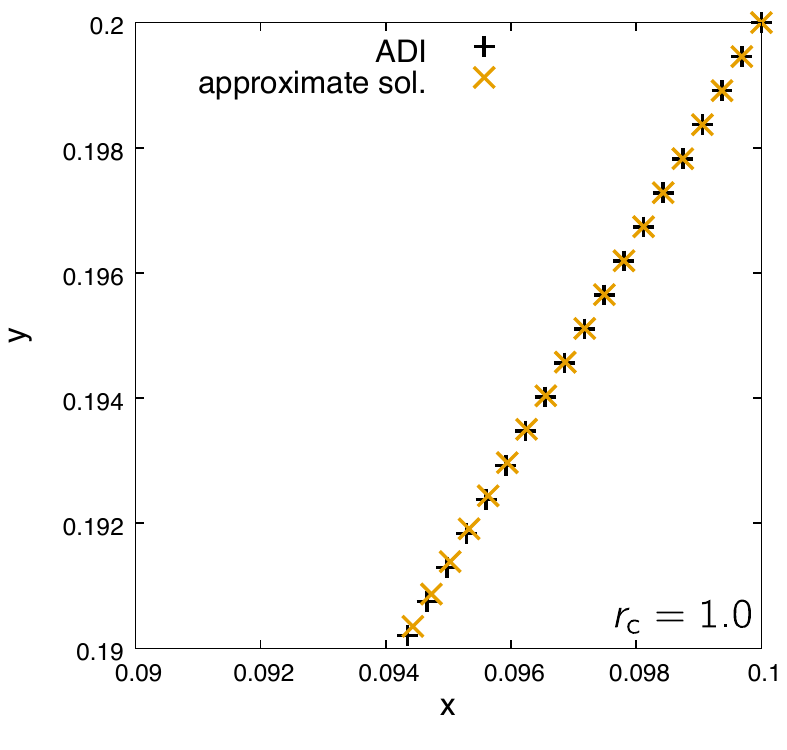}
    \caption{The orbit of the camphor particle calculated by ADI scheme and our approximate equation.
    The initial position of the camphor is $(0.1,0.2)$, the upper right corner of each graphs.
    Each point shows the position of the camphor in every step
    and we set the timestep $\Delta t = 1.0 \times 10^{-4}$.}
    \label{fig:orbits}
\end{figure}
\begin{figure}[tbp]
\includegraphics[width=\linewidth]{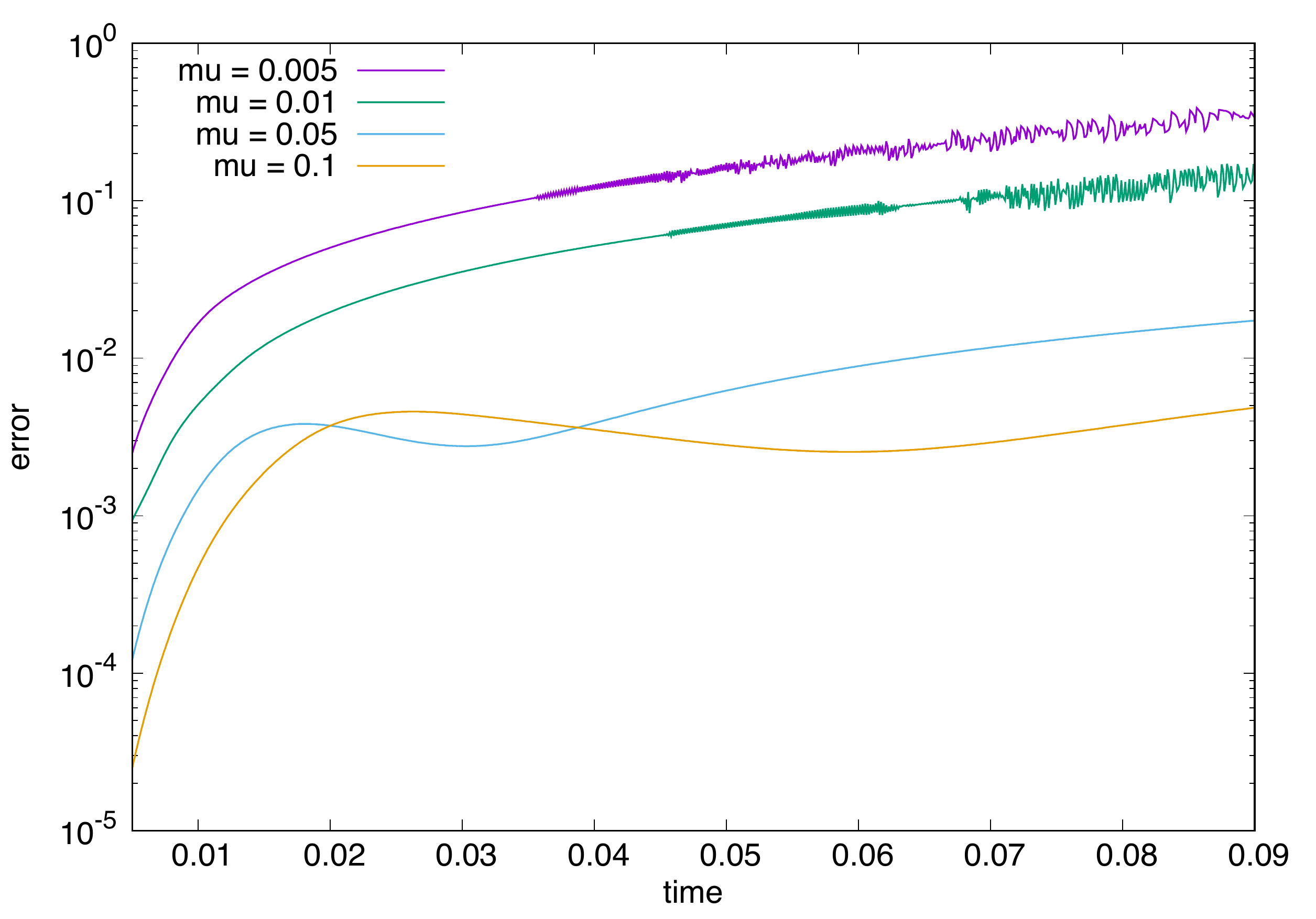}
\caption{The error $\abs{\xc-\tilxc}$ with respect to the viscosity coefficient $\mu = 5 \times 10^{-3}, 1\times 10^{-2}, 5 \times 10^{-2}, 1 \times 10^{-1}$.
Here we have fixed $\rc = 1 \times 10^{-1}$.}
\label{fig:error-mu}
\end{figure}

We are particularly interested in the camphor motion $\xc$ and we have already given some interpretation of the short time approximate solution $\tilde{x}_c$ after \cref{thm:main}. In this section, we show the numerical results comparing these $\xc$ and $\tilde{x}_c$. In this section we
refer them as the solution and approximate solution, respectively.
All codes used for the numerical simulation are written by Julia~\cite{Bezanson2017}.

\subsection{Numerical methods}${}$
\par
To simulate the initial boundary value problem \eqref{eq:diffusion}--\eqref{eq:initial} numerically, 
we used the ADI (Alternating Direction Implicit) method (see~\cite{Douglas1964}, for example) for the diffusion equation~\eqref{eq:diffusion}
and the explicit Euler method for solving~\eqref{eq:ODE}.
The approximate solution was calculated by explicit Euler method
with the time integral approximated by the trapezoidal rule.
In both calculation, we approximated the line integral on the camphor boundary by the trapezoidal rule,
with interpolating the value of $u$ from the values on mesh by the bi-cubic method.

Take the domain $\Omega$ to be a square $[-8,8] \times [-8,8]$
and divide it into $800 \times 800$ meshes.
Then the length of the side of each mesh is $\Delta x = 2.0 \times 10^{-2}$.
Set the time step $\Delta t$ to be $\Delta t = 1.0 \times 10^{-4}$.

We calculated the error of the camphor position between the solution using the ADI method and approximate solution  by changing the viscosity $\mu$ and the camphor radius $\rc$.
The other parameters were fixed as follows: $D=1.0$, $\alpha=1.0$, $\beta_0 = 1.0$ and $\beta_1 = 0.5$.

Let the camphor source $f_{\rc}(x)=F \left( \dfrac{x}{\rc} \right)$
with $F(\xi) = 1$ if $\abs{\xi} < 1 - \delta$,
$F(\xi) = 0$ if $\abs{\xi} > 1 + \delta$
and interpolate it by a fourth order polynomial with respect to $\abs{\xi}$ in $1-\delta \le \abs{\xi} \le 1 + \delta$,
so that $F$ becomes $C^4$ function.
The initial value $u_0$ is set to $u_0(x_1,x_2) = 0.3 + 0.02(x_1+\sqrt{3}x_2)$.

\subsection{Simulation results}${}$
\par
\cref{fig:rc-error} shows that the normalized error $\rc^{-1}\abs{\xc-\tilde{x}_c}$ of camphor position
with respect to the normalized time $\rc^{-2}t$ by changing $\rc=0.2, 0.4, 0.5, 0.8, 1.0$.
Here we fixed $\mu = 1 \times 10^{-2}$ in this simulation.
From~\cref{fig:rc-error},
we see that the camphor position error decreases as the camphor radius $\rc$ becomes small.

The numerically computed respective orbits of the camphor are shown in~\cref{fig:orbits}.
Since our approximate solution is valid in short time, its orbit is getting off from that of the solution as time increases.

Next we compare the errors by changing the viscosity coefficient $\mu$ with fixed $\rc = 0.1$.
\cref{fig:error-mu} shows the error $\abs{\xc - \tilxc}$ of the camphor positions for the viscosity coefficients $\mu = 5 \times 10^{-3}, 1\times 10^{-2}, 5 \times 10^{-2}, 1 \times 10^{-1}$.
In~\eqref{eq:rel_dalta_B} of the proof of the error estimate,
as $\mu$ decreases, 
we need to take the length $\delta$ of the time interval smaller in order to obtain the same error.
In our simulation result, we see that the error decreases as $\mu$ increases.

\bigskip
\noindent
{\bf Acknowledgement}
\newline
The second author acknowledges the supports from Grant-in-Aid for Scientific Research (16H03949) of the Japan Scociety for the Promotion of Science (JSPS).
The third author acknowledges the supports from Grant-in-Aid for Scientific Research (15K21766 and 15H05740) of JSPS.

\bigskip
\bibliographystyle{siamplain}
\bibliography{ex_article}
\end{document}

% --- supplement: ex_supplement.tex ---

\maketitle

\section{A detailed example}

Here we include some equations and theorem-like environments to show
how these are labeled in a supplement and can be referenced from the
main text.
Consider the following equation:
\begin{equation}
  \label{eq:suppa}
  a^2 + b^2 = c^2.
\end{equation}
You can also reference equations such as \cref{eq:matrices,eq:bb} 
from the main article in this supplement.

\lipsum[100-101]

\begin{theorem}
  An example theorem.
\end{theorem}

\lipsum[102]
 
\begin{lemma}
  An example lemma.
\end{lemma}

\lipsum[103-105]

Here is an example citation: \cite{KoMa14}.

\section[Proof of Thm]{Proof of \cref{thm:bigthm}}
\label{sec:proof}
\lipsum[106-112]

\section{Additional experimental results}
\Cref{tab:foo} shows additional
supporting evidence. 

\begin{table}[htbp]
{\footnotesize
  \caption{Example table}  \label{tab:foo}
\begin{center}
  \begin{tabular}{|c|c|c|} \hline
   Species & \bf Mean & \bf Std.~Dev. \\ \hline
    1 & 3.4 & 1.2 \\
    2 & 5.4 & 0.6 \\ \hline
  \end{tabular}
\end{center}
}
\end{table}

\bibliographystyle{siamplain}
\bibliography{references}